\begin{document}

\newtheorem{theorem}{Theorem}[section]
\newtheorem{conjecture}[theorem]{Conjecture}
\newtheorem{corollary}[theorem]{Corollary}
\newtheorem{lemma}[theorem]{Lemma}
\newtheorem{claim}[theorem]{Claim}
\newtheorem{proposition}[theorem]{Proposition}
\newtheorem{construction}[theorem]{Construction}
\newtheorem{definition}[theorem]{Definition}
\newtheorem{question}[theorem]{Question}
\newtheorem{problem}[theorem]{Problem}
\newtheorem{remark}[theorem]{Remark}
\newtheorem{observation}[theorem]{Observation}

\newcommand*{\qed}{\hfill\ensuremath{\blacksquare}}%
\newcommand{\ex}{{\mathrm{ex}}}

\newcommand{\EX}{{\mathrm{EX}}}

\newcommand{\AR}{{\mathrm{AR}}}

\def\endproofbox{\hskip 1.3em\hfill\rule{6pt}{6pt}}
\newenvironment{proof}%
{%
\noindent{\it Proof.}
}%
{%
 \quad\hfill\endproofbox\vspace*{2ex}
}
\def\qed{\hskip 1.3em\hfill\rule{6pt}{6pt}}
\def\ce#1{\lceil #1 \rceil}
\def\fl#1{\lfloor #1 \rfloor}
\def\lr{\longrightarrow}
\def\e{\varepsilon}
\def\ex{{\rm\bf ex}}
\def\cA{{\cal A}}
\def\cB{{\cal B}}
\def\cC{{\cal C}}
    \def\cD{{\cal D}}
\def\cF{{\cal F}}
\def\cG{{\cal G}}
\def\cH{{\cal H}}
\def\cK{{\cal K}}
\def\cI{{\cal I}}
\def\cJ{{\cal J}}
\def\cL{{\cal L}}
\def\cM{{\cal M}}
\def\cP{{\cal P}}
\def\cQ{{\cal Q}}
\def\cR{{\cal R}}
\def\cS{{\cal S}}
\def\cE{{\cal E}}
\def\cT{{\cal T}}
\def\ex{{\rm ex}}
\def\pr{{\rm Pr}}
\def\exp{{\rm  exp}}
\def\va{\vec{a}}

\def\wt{\widetilde{T}}
\def\bkl{{\cal B}^{(k)}_\ell}
\def\cmkt{{\cal M}^{(k)}_{t+1}}
\def\cpkl{{\cal P}^{(k)}_\ell}
\def\cckl{{\cal C}^{(k)}_\ell}
\def\pkl{\mathbb{P}^{(k)}_\ell}
\def\ckl{\mathbb{C}^{(k)}_\ell}

\def\mC{{\cal C}}

\def\imp{\Longrightarrow}
\def\1e{\frac{1}{\e}\log \frac{1}{\e}}
\def\ne{n^{\e}}
\def\rad{ {\rm \, rad}}
\def\equ{\Longleftrightarrow}
\def\pkl{\mathbb{P}^{(k)}_\ell}

\def\mE{\mathbb{E}}

\def\mP{\mathbb{P}}

\def \e{\epsilon}
\voffset=-0.5in

\setstretch{1.1}
\pagestyle{myheadings}
\markright{{\small \sc  Jiang, Qiu:}
	{\it\small On Tur\'an numbers of bipartite subdivisions}}

\title{\huge\bf  Tur\'an numbers of bipartite subdivisions}

\author{
	Tao Jiang\thanks{Department of Mathematics, Miami University, Oxford,
		OH 45056, USA. E-mail: jiangt@miamioh.edu. }
	\quad \quad Yu Qiu \thanks{
		School of Mathematical Sciences,
		University of Science and Technology of China, Hefei, 230026,
		P.R. China. Email: yuqiu@mail.ustc.edu.cn. Research supported by China Scholarship Council.
		\newline\indent
		{\it 2010 Mathematics Subject Classifications:}
		05C35.\newline\indent
		{\it Key Words}:  Tur\'an number,  Tur\'an exponent, extremal function, subdivision
} }

\date{May 28, 2019}
\maketitle
\begin{abstract}
	Given a graph $H$, the {\it Tur\'an number} $\ex(n,H)$ is the largest number of edges in an $H$-free graph on $n$ vertices. We make progress on a recent conjecture of Conlon, Janzer, and Lee \cite{CJL} on the Tur\'an numbers of bipartite graphs, which in turn yields further progress on a conjecture of Erd\H{o}s and Simonovits \cite{Erdos}.
	
	Let $s,t,k\geq 2$ be integers. Let $K_{s,t}^k$ denote the graph obtained from the complete bipartite graph $K_{s,t}$	by replacing each edge $uv$ in it with a path of length $k$ between $u$ and $v$ such that the $st$ replacing paths are internally disjoint.
	It follows from a general theorem of Bukh and Conlon \cite{BC} that
	$\ex(n,K_{s,t}^k)=\Omega(n^{1+\frac{1}{k}-\frac{1}{sk}})$.
	Conlon, Janzer, and Lee \cite{CJL} recently conjectured that
	for any integers $s,t,k\geq 2$, $\ex(n,K_{s,t}^k)=O(n^{1+\frac{1}{k}-\frac{1}{sk}})$. Among many other things,  they settled the $k=2$ case of their conjecture.  As the main result of this paper, we prove their conjecture for $k=3,4$.

	Our main results also yield infinitely many new so-called {\it Tur\'an exponents}:
	rationals $r\in (1,2)$ for which there exists a bipartite graph $H$ with $\ex(n, H)=\Theta(n^r)$,
	adding to the lists recently obtained by Jiang, Ma, Yepremyan \cite{JMY}, by Kang, Kim, Liu
	\cite{KKL}, and by Conlon, Janzer, Lee \cite{CJL}. 
	
Our method builds on  an extension of the Conlon-Janzer-Lee method. We also note that the extended method also gives a weaker version of the Conlon-Janzer-Lee conjecture for all $k\geq 2$.

\end{abstract}

\section{Introduction}
Given a family $\cH$ of graphs, the {\it Tur\'an number} $ex(n,\cH)$ is the largest number of edges
in an $n$-vertex graph that does not contain any member of $\cH$. If $\cH$ consists of a single graph $H$, we write $ex(n,H)$ for $ex(n,\{H\})$.  Let $p=\min\{\chi(H)-1: H\in \cH\}$,
where $\chi(H)$ denotes the chromatic number of $H$. The celebrated Erd\H{o}s-Stone-Simonovits theorem asserts that $\ex(n,\cH)=(1-\frac{1}{p}+o(1))\binom{n}{2}$. This determines the function for all families that do not contain a bipartite member. When $\cH$ contains a bipartite graph, the problem
is generally wide-open, with many intriguing conjectures. One of these, known as the {\it Tur\'an exponent conjecture}, was made by Erd\H{o}s and Simonovits \cite{Erdos} that asserts that for any rational $r\in (1,2)$ there exists a bipartite graph $H$ such that $\ex(n,H)=\Theta(n^r)$. We call a rational $r$ for which the Erd\H{o}s-Simonovits conjecture holds a {\it Tur\'an exponent}.
In a recent breakthrough,
Bukh and Conlon \cite{BC} have proved that for any rational number $r\in (1,2)$ there exists a finite family $\cH$ of graphs such that $\ex(n,\cH)=\Theta(n^r)$. On the other hand, the original conjecture of Erd\H{o}s and Simonovits concerning single bipartite graphs is still generally open. Until recently, it was only known to be true for $r=1+1/k$ and $r=2-1/k$ where $k\geq 2$ is a positive integer. Recently, there have been a flurry of progresses on the conjecture, by Jiang, Ma, Yepremyan \cite{JMY}, by Kang, Kim, Liu
\cite{KKL}, and by Conlon, Janzer, Lee \cite{CJL}. For more detailed discussions on recent works
on the Erd\H{o}s-Simonovits conjecture, the reader is referred to \cite{BC, JMY, KKL, CJL}.

A recent focal point on the Erd\H{o}s-Simonovits conjecture, with motivations from other problems as well, concerns the Tur\'an number of so-called subdivisions of graphs. Given a graph $H$, and an integer $k\geq 2$, let $H^k$ denote the graph obtained by replacing each edge $uv$ of $H$ with a path 
of length $k$ between $u$ and $v$ so that the $e(H)$ replacing paths are internally vertex disjoint. The Tur\'an number of $H^k$ is
studied in \cite{Jiang} and \cite{JS}, based on earlier work in \cite{KP}. Recently, significant progresses  on the problem have been made in \cite{CL}, \cite{Janzer}, and \cite{CJL}.
Let $s,t,k\geq 2$ be integers.  As usual, let $K_{s,t}$ denote the complete bipartite graph with part sizes $s$ and $t$. Let $K_{s,t}^{k}=(K_{s,t})^k$.
It follows from the above mentioned breakthrough work of Bukh and Conlon \cite{BC} that
$\ex(n,K_{s,t}^k)=\Omega(n^{1+\frac{1}{k}-\frac{1}{sk}})$.
Conlon, Janzer, and Lee \cite{CJL} recently made the following conjecture
on a matching upper bound.

\begin{conjecture} {\rm \cite{CJL}} \label{CJL-conjecture}
	For any integers $s,t,k\geq 2$, $\ex(n,K_{s,t}^k)=O(n^{1+\frac{1}{k}-\frac{1}{sk}})$.
\end{conjecture}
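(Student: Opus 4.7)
The plan is to extend the Conlon--Janzer--Lee framework, which settled the $k=2$ case of Conjecture \ref{CJL-conjecture}, to arbitrary $k \geq 2$ through a layered path-counting argument combined with a balanced supersaturation step. Let $G$ be an $n$-vertex graph with $e(G) \geq C\, n^{1+\frac{1}{k}-\frac{1}{sk}}$ for a sufficiently large $C = C(s,t,k)$; the goal is to exhibit a copy of $K_{s,t}^k$ in $G$. First I would apply the standard Erd\H{o}s regularization to pass to an almost-regular subgraph $G'$ on at least $n/2$ vertices with average degree $d = \Theta(n^{\frac{1}{k} - \frac{1}{sk}})$, losing only a logarithmic factor in edge count.

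The central auxiliary object is an \emph{$(s,k)$-broom}: a center $u$, leaves $v_1, \ldots, v_s$, and $s$ internally vertex-disjoint $u$--$v_i$ paths $P_i$, each of length $k$. The core technical step is a \emph{balanced supersaturation lemma} for these brooms: the total number of $(s,k)$-brooms in $G'$ is at least $\Omega(n \cdot d^{sk})$, and, moreover, for all but an $o(1)$ fraction of unordered $s$-sets $\{v_1, \ldots, v_s\}$ the number of brooms with that spine is within a constant factor of the average. I would approach this by first counting length-$k$ walks from each candidate center to ordered $s$-tuples using iterated convexity on the adjacency operator of $G'$, then cleaning the count in $k-1$ layers, discarding walks whose internal vertices collide either within a single path (cycles) or between two of the $s$ paths, using a Janzer-type inequality applied level by level.

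Given the balanced count, the finish is a Kővári--Sós--Turán application on the bipartite incidence structure between centers and unordered $s$-spines: some spine receives $\Omega(t \cdot \mathrm{poly}(s,k,C))$ centers, and a probabilistic thinning together with a greedy path-reservation procedure then extracts $t$ of these centers whose brooms are pairwise internally disjoint, yielding the desired $K_{s,t}^k$. The balance hypothesis is exactly what enables the thinning step to succeed with positive probability; without it one would lose powers of $n$ and fall short of the conjectured exponent.

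The hard part is the balanced supersaturation lemma for general $k$. For $k=2$ a single internal level is involved and a direct cherry count suffices; the present paper handles $k=3,4$ by tailored extensions of the Conlon--Janzer--Lee method. For larger $k$, two length-$k$ paths may collide at any of the $k-1$ internal levels, and the intersection patterns form a rich combinatorial family whose total contribution must be separated from the honest brooms with matching exponent. My plan to overcome this barrier would be a two-scale approach: first, apply the Janzer-style cleaning layerwise to bound single-level collisions, obtaining an intermediate count that loses a factor growing with $k$; second, close the gap via an entropy-compression argument on the joint distribution of the $(k-1)$-tuple of internal vertices of a uniformly random length-$k$ walk between two fixed endpoints, exploiting almost-regularity to show that the entropy is nearly additive across levels unless the collision structure is atypical, with atypical configurations removed by a separate deletion step. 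Making this strategy quantitatively deliver the precise exponent $\frac{1}{k} - \frac{1}{sk}$ is the crux, and it is exactly the point at which the paper's extended method is remarked to give only a weaker bound.
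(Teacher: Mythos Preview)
First, a framing correction: Conjecture~\ref{CJL-conjecture} is not proved in the paper; only the cases $k\in\{3,4\}$ are established (Theorem~\ref{main}), and for general $k$ the paper obtains only the weakened family version in Proposition~\ref{at-most-k}. So your proposal is an attack on the full open conjecture, not on something the paper proves.

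Your outline also diverges from the paper's argument even in the cases the paper does handle. The paper never pursues a balanced supersaturation statement for brooms. Instead it runs a dichotomy: a balanced $s$-legged spider of height $k$ either contains a \emph{critical path} of length at most $k$, or contains a \emph{strong sub-spider} (Definition~\ref{strong-definition}), or neither. Lemma~\ref{short-paths} (imported from \cite{CJL}) bounds spiders containing critical paths; Lemma~\ref{balanced-spiders} and Lemma~\ref{1kk-spiders} bound spiders containing strong sub-spiders of the only two shapes that can occur, and it is exactly Lemma~\ref{lemma: strong --> l_i+l_j>k} that restricts the possible shapes to these two --- this is where $k\le 4$ enters. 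What remains is still $\Omega(n\delta^{sk})$ ``clean'' spiders; averaging over leaf $s$-tuples gives one tuple carrying a large constant number of them, and Lemma~\ref{lemma: find t internally disjoint spiders} then forces a strong sub-spider among those, a contradiction. Internal disjointness is thus never obtained by thinning or path-reservation; it is forced structurally via the strong-spider notion.

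The genuine gap in your plan is exactly where you place it: the balanced supersaturation lemma. There is no known mechanism showing that most $s$-tuples carry roughly the average number of brooms; the entire difficulty is that a few $s$-tuples may absorb almost all brooms through shared internal structure. Your layerwise cleaning handles pairwise collisions at a single level, but the entropy-compression sketch is not a proof: almost-regularity of $G$ says nothing about near-independence of the internal coordinates of a random length-$k$ walk \emph{between two fixed endpoints}, which is precisely what additive entropy would require. The paper's strong-spider machinery is one concrete way to tame heavy tuples, but for $k\ge 5$ the possible strong length vectors $(\ell_1,\dots,\ell_s)$ escape the two cases covered by Lemmas~\ref{balanced-spiders} and~\ref{1kk-spiders}, and that is the remaining obstacle. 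Two smaller points: Erd\H{o}s--Simonovits regularization (Lemma~\ref{almost-regular}) returns a subgraph on $m\ge n^{c}$ vertices, not on $n/2$ vertices; and the step you call a K\H{o}v\'ari--S\'os--Tur\'an application is just pigeonhole (one spine with many centers), not a genuine $K_{s,t}$ count.
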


In \cite{CJL},  among many other things, Conlon, Janzer and Lee settled the $k=2$ case of Conjecture \ref{CJL-conjecture}, showing that
$\ex(n, K_{s,t}^2)=O(n^{\frac{3}{2}-\frac{1}{2s}})$. In this paper, we prove their conjecture
for $k=3,4$. 
\begin{theorem}\label{main}
	For any integers $s,t\geq 2$ and $k\in \{3,4\}$, $\ex(n,K_{s,t}^k)
	=O(n^{1+\frac{1}{k}-\frac{1}{sk}})$.
\end{theorem}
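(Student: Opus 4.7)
The plan is to extend the Conlon--Janzer--Lee (CJL) framework that handled the $k=2$ case. Assume for contradiction that $G$ is a $K_{s,t}^k$-free graph on $n$ vertices with $e(G) \geq C n^{1+\frac{1}{k}-\frac{1}{sk}}$ for a sufficiently large $C = C(s,t,k)$. The first step is regularization: by iteratively deleting vertices of very small degree and then passing to a subset of vertices of comparable degree, we reduce to a subgraph whose degrees are essentially uniform and equal to some $d = \Theta(n^{\frac{1}{k}-\frac{1}{sk}})$. Under this assumption the total number of length-$k$ paths is $\Omega(n d^k) = \Omega(n^{2-\frac{1}{s}})$, so by convexity many pairs $(u,v)$ are connected by an atypically large number of such paths.

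The next step, which is the heart of the argument, is to convert this supply of length-$k$ paths into length-$k$ paths that are \emph{internally vertex-disjoint}. For $k=2$ this is vacuous, since a length-$2$ path is determined by its unique middle vertex. For $k=3,4$, one would define a refined count $\tilde{p}_k(u,v)$ that restricts to length-$k$ paths whose internal vertices are ``generic'' in the sense that they do not lie in atypically large bundles of length-$k$ paths between some other pair. The core technical claim, proved via a weighted double-count that exploits $K_{s,t}^k$-freeness to bound intermediate substructures (cherries, shorter sub-paths, and theta-like configurations of length $k$), is that the total $\sum_{u,v} \tilde{p}_k(u,v)$ remains $\Omega(n^{2-\frac{1}{s}})$. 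A multi-stage pigeonhole then extracts $s$ vertices $u_1,\dots,u_s$ and $t$ further vertices $v_1,\dots,v_t$ such that every pair $(u_i,v_j)$ admits many generic, pairwise internally disjoint length-$k$ paths.

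The final step is a greedy selection of one length-$k$ path $P_{ij}$ for each pair $(u_i,v_j)$ so that the $st$ chosen paths are pairwise internally vertex-disjoint, producing the forbidden $K_{s,t}^k$. Because each already-chosen path blocks only $k-1$ internal vertices, the genericity condition ensures that the remaining supply of good paths dominates the number of blocked configurations at every stage. The principal obstacle lies in the refined count inequality of the second step: one needs sharp structural bounds, derived from $K_{s,t}^k$-freeness, on the number of length-$k$ paths passing through a vertex that is already ``overused'' by other length-$k$ paths, and one needs them in a form strong enough to survive the aggregated double-count. These bounds are parity- and length-sensitive; they go through cleanly for $k=3$ and $k=4$, where the $k-1 \in \{2,3\}$ internal vertices can be grouped and analyzed in pairs or via a single ``centre'' vertex, but pushing the method to $k\geq 5$ appears to require genuinely new ideas, which is presumably why the theorem is restricted to $k\in\{3,4\}$.
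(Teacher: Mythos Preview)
Your proposal is a plan rather than a proof, and it diverges from the paper's argument in ways that leave the central difficulty unresolved. You count length-$k$ paths between pairs $(u,v)$ and then invoke an unspecified ``multi-stage pigeonhole'' to extract $s$ vertices $u_1,\dots,u_s$ on one side; but $\sum_{u,v}\tilde{p}_k(u,v)=\Omega(n^{2-1/s})$ gives only $n^{-1/s}$ paths per pair on average, and no mechanism is offered for producing the $s$-tuple from this data. The paper sidesteps this entirely by counting \emph{$s$-legged balanced spiders of height $k$} (copies of $K_{1,s}^k$): there are $\Omega(n\delta^{sk})\geq \Omega(n^s)$ of them, each already carries its leaf $s$-tuple, and one pigeonholes on that tuple directly. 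Your ``refined count'' claim, which you assert is ``proved via a weighted double-count'', is in the paper replaced by two explicit devices: the CJL notion of \emph{critical paths} (Lemma~\ref{short-paths} bounds their number, so spiders containing one are discarded cheaply) and a new notion of \emph{strong sub-spiders} (Definition~\ref{strong-definition}). The endgame (Lemma~\ref{lemma: find t internally disjoint spiders}) shows that many spiders on a fixed leaf $s$-tuple with no critical subpath must contain a strong sub-spider, while Lemma~\ref{balanced-spiders} and Lemma~\ref{1kk-spiders} show that few spiders can contain one; the tension gives the contradiction.

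Your account of why $k\in\{3,4\}$ is special is also not the actual reason; it has nothing to do with ``grouping the $k-1$ internal vertices in pairs or via a single centre''. The bottleneck is the classification in Lemma~\ref{lemma: strong --> l_i+l_j>k}: a strong sub-spider with length vector $(\ell_1,\dots,\ell_s)$ (ordered increasingly) and no critical subpath must satisfy $\ell_i+\ell_j\geq k+1$ for all $i\neq j$, and for $k\in\{3,4\}$ this forces either $\ell_i\geq k/2$ for all $i$ (the general case, handled by Lemma~\ref{balanced-spiders}) or $(\ell_1,\dots,\ell_s)=(1,k,\dots,k)$ (handled separately by Lemma~\ref{1kk-spiders}). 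For $k\geq 5$ further length vectors such as $(2,k,\dots,k)$ escape both lemmas, and that, not any parity consideration on internal vertices of a single path, is where the method currently stops.
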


We remark that our theorem together with the theorem of Bukh and Conlon also yields infinitely many new {\it Tur\'an exponents}: namely those of the form $1+\frac{1}{k}-\frac{1}{sk}$, where
$s\geq 2$ is any integer and $k\in \{3,4\}$. The majority of the rest of the paper is devoted to the proof of our main result: Theorem \ref{main}. We then conclude with some observations in the concluding remarks.


\section{Notation and a basic lemma}

As is often the case in the study of bipartite Tur\'an problems,
our problem may be reduced to the setting in which the host graph is almost regular.
Specifically, given a positive integer $K$, we say that a graph $G$ is {\it $K$-almost-regular}
if $\Delta(G)\leq K\cdot \delta(G)$.

The following lemma can be found in \cite{JS}, which is a slight adaption
of the regularization lemma of Erd\H{o}s and Simnovits \cite{cube}. Another recent adaption of this can be found in \cite{CJL}.

\begin{lemma}{\rm (\cite{JS}] Proposition 2.7)} \label{almost-regular}
	Let $0<\epsilon<1$ and $c\ge1$. There exists $n_0=n_0(\epsilon)>0$ such that the following holds for all $n\ge n_0$. If $G$ is a graph on $n$ vertices with $e(G)\ge cn^{1+\epsilon}$, then $G$ contains a $K$-almost regular subgraph $G'$ on $m\ge n^{\frac{\epsilon-\epsilon^2}{2+2\epsilon}}$ vertices such that $e(G')\ge\frac{2c}5m^{1+\epsilon}$ and $K=20\cdot2^{\frac1{\epsilon^2}+1}.$
\end{lemma}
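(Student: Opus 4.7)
The plan is to prove this via an iterative descent in the spirit of the original Erd\H{o}s--Simonovits regularization argument. Introduce the density parameter $f(H) := e(H)/v(H)^{1+\epsilon}$, so the hypothesis reads $f(G)\ge c$; the conclusion asks for a subgraph $G' \subseteq G$ that is $K$-almost-regular with $f(G') \geq 2c/5$ and $v(G') \geq m_0 := n^{(\epsilon-\epsilon^2)/(2+2\epsilon)}$. I will construct $G'$ as the output of a finite descent that, at each step, either certifies the current subgraph is $K$-almost-regular or passes to a strictly smaller subgraph whose density parameter $f$ has not dropped too much.

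The descent works as follows. Given a current subgraph $H$, partition its vertices into dyadic degree classes $V_j := \{v : 2^j \leq d_H(v) < 2^{j+1}\}$; at most $\lceil \log_2 v(H) \rceil$ are nonempty. If the range of occupied indices is less than $\log_2 K$, then $H$ is already $K$-almost-regular and we stop. Otherwise, after a preliminary cleaning step that deletes vertices of degree below the average divided by a small constant (costing only a small fraction of edges), a pigeonhole over the $O(\log v(H))$ dyadic classes produces one class $V_{j^*}$ capturing an $\Omega(1/\log v(H))$ fraction of edge-endpoints. Restricting to the subgraph on $V_{j^*}$, or, if needed, to a bipartite slice between $V_{j^*}$ and an adjacent class, yields a subgraph $H'$ with $v(H') \leq v(H)/2$ whose $f$-value is bounded below by $f(H)$ up to a controlled factor of the form $2^{\epsilon}/\mathrm{polylog}(v(H))$. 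This $H'$ becomes the input to the next iteration.

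The main obstacle is the parameter bookkeeping. Since $f(H) \leq v(H)^{1-\epsilon}$ holds trivially and each descent step either produces a $K$-almost-regular output or multiplies $f$ by a factor that is bounded above $1$, the descent can run for at most $O(1/\epsilon^2)$ iterations; this matches the exponent in $K = 20 \cdot 2^{1/\epsilon^2+1}$, which is sized precisely to absorb all accumulated multiplicative losses in $f$ and still leave the density at least $2c/5$. In parallel, the repeated halvings of $v(H)$ must not drop below $m_0$ before termination, which is why $m_0$ is chosen as a polynomial in $n$ with the prescribed exponent $(\epsilon-\epsilon^2)/(2+2\epsilon)$. The heart of the proof is verifying these three arithmetic constraints simultaneously — density loss, vertex-count floor, and degree-ratio cap — so that the descent necessarily terminates in the $K$-almost-regular case rather than by exhausting its vertex budget.
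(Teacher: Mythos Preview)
The paper does not prove this lemma; it is quoted verbatim from \cite{JS} (Proposition~2.7), itself an adaptation of the Erd\H{o}s--Simonovits regularization argument in \cite{cube}. So there is nothing in the present paper to compare your sketch against, and your proposal has to stand on its own. It does not.

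There are two concrete gaps. First, the dyadic--degree-class step does not deliver what you claim. Pigeonholing over the classes $V_j$ gives a class $V_{j^*}$ incident to an $\Omega(1/\log v(H))$ fraction of edge-\emph{endpoints}, but the subgraph \emph{induced} on $V_{j^*}$ (or on $V_{j^*}\cup V_{j^*\pm 1}$) need not retain any fixed fraction of the edges --- most edges touching $V_{j^*}$ may leave it --- and nothing forces $|V_{j^*}|\le v(H)/2$. Second, and more seriously, your termination argument is internally inconsistent. You state that each step multiplies $f$ by a factor of order $2^{\epsilon}/\mathrm{polylog}(v(H))$, which is \emph{below} $1$ whenever $v(H)$ is large, and then appeal to the trivial ceiling $f(H)\le v(H)^{1-\epsilon}$ to cap the number of iterations. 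An upper bound on a decreasing quantity yields no termination bound. Even under the most charitable reading (drop the polylog loss and pretend $f$ grows by $2^{\epsilon}$ while $v$ halves), the ceiling $f\le v^{1-\epsilon}$ gives only $O(\log n)$ steps, not $O(1/\epsilon^{2})$; nothing in your outline generates the $1/\epsilon^{2}$ that appears in $K$.

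The argument in \cite{cube} and \cite{JS} is organized differently: one orders the vertices by degree and repeatedly passes to an induced subgraph on a \emph{fixed} proportion (a power of $2$ chosen as a function of $\epsilon$) of the vertices of highest or lowest degree. The dichotomy at each step is that either the current graph already has $\Delta/\delta\le K$, or this fixed-proportion restriction strictly \emph{increases} $f$ by a factor bounded away from $1$ independent of $v(H)$ --- no polylog loss. The exponent $1/\epsilon^{2}$ in $K$ then comes out of balancing the shrink rate against $\epsilon$ so that the process halts before $v(H)$ drops below $n^{(\epsilon-\epsilon^{2})/(2+2\epsilon)}$. Your sketch has the right spirit but the wrong engine: dyadic degree classes with a $1/\log$ pigeonhole cannot drive this descent.
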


For most of the rest of the paper we will always assume our host graph $G$ to be almost regular.
Then in the main proof we apply Lemma \ref{almost-regular} on general host graphs.

\section{Building subdivisions using  substructures}

\subsection{Building subdivisions using critical paths} \label{critical-paths} 
 
In this section, we present one of the main ingredients used by Conlon, Janzer, and Lee \cite{CJL}.
To make our presentation  consistent with the rest of our paper, we present their
results using our notation and terminology.

\begin{definition} {\rm (Definition 6.2 of \cite{CJL})} 
	{\rm
		Let $L$ be an integer. Define the function $f(\ell, L)$ for $0\leq\ell\leq k$ recursively by
		setting $f(0,L)=1,f(1,L)=L$ and, for $2\leq \ell\leq k$,
		\[f(\ell, L)=1+f(\ell-1)^{16} (\ell-1)^2 \max_{1\leq i\leq \ell-1} f(i,L) f(\ell-i, L).\]
	}
\end{definition}
\begin{definition} \label{heavy-light-path-definition}
	{\rm
		Let $L$ be fixed. Let $G$ be a graph. For each $\ell\geq 1$,
		a path $P$ of length $\ell$ in $G$ with endpoints $x,y$ is called
		{\it $\ell$-heavy} if there are more than $f(\ell,L)$ distinct $x,y$-paths of length $\ell$ in $G$ and
		is called {\it $\ell$-light} otherwise.
		A path $P$ of length $\ell$ in $G$ is called {\it $\ell$-critical} if it is $\ell$-heavy but for each $j<\ell$
		each subpath of length $j$ is $j$-light. Since the length of a given path is fixed, we may drop the prefix and use terms heavy, light, critical directly.}
\end{definition}
In \cite{CJL}, lights paths are called {\it good paths} and critical paths are called {\it admissible paths}.
The following lemma is implied by Lemma 6.8  and Corollary 6.9 of \cite{CJL} since their forbidden subgraph $H$ is a supergraph of $K_{s,t}^k$.
\begin{lemma} \label{short-paths}
	Let $G$ be a $K_{s,t}^{k}$-free 
	$K$-almost-regular graph on $n$ vertices with minimum degree $\delta$.
	Then provided that $L$ is sufficiently large compared to $s,t,k$ and $K$, for any
	$2\leq \ell \leq k$, the number of $\ell$-critical paths is at most $n\frac{2(K\delta)^\ell}{f(\ell-1, L)}$. \qed
\end{lemma}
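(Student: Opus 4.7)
The plan is to argue by contradiction: assume the number of $\ell$-critical paths in $G$ exceeds $2n(K\delta)^\ell/f(\ell-1, L)$, and construct a copy of $K_{s,t}^k$ inside $G$, contradicting the hypothesis that $G$ is $K_{s,t}^k$-free. I would run the argument by strong induction on $\ell$, since the bounds on critical $j$-paths for $j < \ell$ will be needed to control short subpaths produced during later extensions.

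The first step is a concentration phase. Summing (critical path, endpoint) incidences over the $n$ vertices of $G$ yields some vertex $v$ incident to at least $4(K\delta)^\ell/f(\ell-1, L)$ critical $\ell$-paths. The other endpoints of these paths form a set $W$ of vertices $w$ for which the pair $(v, w)$ is $\ell$-heavy, so that there are more than $f(\ell, L)$ distinct paths of length $\ell$ between $v$ and $w$. A K\H{o}v\'ari--S\'os--Tur\'an-style double count within the resulting bipartite heavy-pair incidence structure then extracts $t$ vertices $w_1,\dots,w_t \in W$ sharing $s$ common $\ell$-heavy partners $u_1,\dots,u_s$, yielding an $s \times t$ grid of $\ell$-heavy pairs $(u_i, w_j)$, which is the ``skeleton'' of a would-be $K_{s,t}^k$.

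The crux of the proof is realizing this skeleton as an actual copy of $K_{s,t}^k$. For each heavy pair $(u_i, w_j)$ we have more than $f(\ell, L)$ paths of length $\ell$ in hand; when $\ell < k$, I would further extend each such $\ell$-path to length $k$ by appending a tail of length $k-\ell$ through vertices that are ``light'' in the sense of Definition \ref{heavy-light-path-definition}, and then prune so that all $st$ extended paths are pairwise internally vertex-disjoint. This greedy extraction is the main obstacle. The recursive definition $f(\ell, L) = 1 + f(\ell-1, L)^{16}(\ell-1)^2 \max_{1 \leq i \leq \ell-1} f(i, L) f(\ell - i, L)$ is calibrated precisely for this: the $f(\ell-1, L)^{16}$ factor supplies super-polynomial slack that absorbs the $O(kst)$ internal vertices of already-chosen paths to be avoided at each greedy step, while the $\max_i f(i, L)f(\ell - i, L)$ factor handles the bookkeeping when a candidate path is split at an intermediate vertex, each half being controlled by the inductive bound on critical paths of shorter length. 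Carrying this out for every pair $(u_i, w_j)$ produces $st$ internally vertex-disjoint paths of length $k$, hence a $K_{s,t}^k$, completing the contradiction.
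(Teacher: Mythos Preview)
The paper does not give its own proof of this lemma: it is quoted as a direct consequence of Lemma~6.8 and Corollary~6.9 of \cite{CJL}, with the remark that the forbidden graph there is a supergraph of $K_{s,t}^k$. So there is no in-paper argument to compare against; the relevant question is whether your sketch stands on its own.

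There are two genuine gaps. First, the K\H{o}v\'ari--S\'os--Tur\'an step does not go through as written. After pigeonholing to a vertex $v$ incident to many critical $\ell$-paths, you only learn that $v$ has many $\ell$-heavy partners---a star in the heavy-pair graph. Nothing you have established says that vertices $w\in W$ have any $\ell$-heavy partner other than $v$, so there is no density to feed into a KST count. If instead you skip the concentration at $v$ and bound the total number of $\ell$-heavy pairs directly (each such pair supports at most $K\delta\cdot f(\ell-1,L)$ critical $\ell$-paths), you obtain on the order of $n(K\delta)^{\ell-1}/f(\ell-1,L)^2$ heavy pairs. But in the regime $\delta\approx n^{1/k-1/(sk)}$ and $\ell\le k$ this is $o(n^{2-1/s})$, strictly below the KST threshold for finding a $K_{s,t}$ among heavy pairs. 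So the skeleton $\{u_1,\dots,u_s\}\times\{w_1,\dots,w_t\}$ cannot be produced this way.

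Second, the ``append a tail of length $k-\ell$'' step does not build $K_{s,t}^k$. In $K_{s,t}^k$ the $st$ paths of length $k$ must run between the \emph{fixed} vertices $u_i$ and $w_j$. Appending a tail to an $\ell$-path from $u_i$ to $w_j$ moves one endpoint off $w_j$ (or $u_i$), and there is no mechanism to make all $s$ extended paths meeting $w_j$ terminate at a common new vertex. So for $\ell<k$ your construction yields at best a homeomorph of $K_{s,t}$ with mixed edge-lengths, not $K_{s,t}^k$. The actual argument in \cite{CJL} avoids both issues by building a different (larger) forbidden subgraph $H\supseteq K_{s,t}^k$ via a more involved greedy embedding that exploits the recursive structure of $f(\ell,L)$ along all subpaths simultaneously, rather than trying to realise a heavy-pair $K_{s,t}$ first.
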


Lemma \ref{short-paths} roughly says if a graph has many short critical paths, then we can
easily build a copy of $K_{s,t}^k$. In the next subsection, we develop analogous statements
for other critical substructures.


\subsection{Building subdivisions using strong spiders, the general case} \label{general-case}

A {\it non-path spider} is a tree with exactly one vertex $w$ of degree at least three, called the {\it center}. Paths from the center to the leaves are called {\it legs}. 
A spider in which all legs have length $h$ is called a {\it balanced spider of height $h$}.
In this section, whenever we discuss a non-path spider $T$ with $m$ legs, we always fix a particular labelling of its leaves as $v_1,\dots, v_m$. For each $i\in [m]$, let $\ell_i$ be distance from the center $w$ of $T$ to $v_i$. We call  $T$ a spider with {\it leaf vector} $(v_1,\dots, v_m)$ and 
{\it length vector} $(\ell_1,\dots, \ell_m)$.

\begin{definition} \label{strong-definition}
	{\rm  
	Let $s,k\geq 2$ be integers.
		Let $G$ be a graph. Let $\vec{\ell}:=(\ell_1,\dots, \ell_s)$ be a vector of $s$ positive integers,
		each of which is at most $k$. 
		We say that a vertex ordered tuple $(v_1,\dots, v_s)$ in $G$ is 
		$(\ell_1,\dots, \ell_s)$-{\it strong}  if $G$ contains at least $(sk)^{sk-\ell}\cdot f(k,L)$ internally vertex-disjoint  spiders with leaf vector $(v_1,\dots, v_s)$ and length vector $(\ell_1,\dots, \ell_s)$,
		where $\ell=\ell_1+\dots +\ell_s$. A spider with leaf vector $(v_1,\dots, v_s)$ and length vector $(\ell_1,\dots, \ell_s)$ is called $(\ell_1,\dots, \ell_s)$-{\it strong} if the tuple $(v_1,\dots, v_s)$ is $(\ell_1,\dots, \ell_s)$-strong. Since the length vector of any spider is fixed, whenever we say a spider is strong, it is understood that it is strong relative to its length vector.
	}
\end{definition}

\begin{lemma} \label{spider-shrinking}
	Let $G$ be a $K$-almost-regular graph with minimum degree $\delta$.
	Let $x$ be a vertex. Let $\cC$ be a family
	of at least $\alpha \delta^h$ distinct paths of length $h$ with one end $x$  and
	another end in a set $S$. Then $\cC$ contains a subfamily $\cD$ of more than
	$(\alpha/h K^{h-1}) \delta$ paths which are vertex-disjoint outside $\{x\}$.
\end{lemma}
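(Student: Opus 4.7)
The plan is a greedy deletion/maximality argument. Start with $\cD = \emptyset$ and, while some $P \in \cC$ satisfies $V(P) \cap V(\cD) \subseteq \{x\}$, add such a $P$ to $\cD$; continue until no such $P$ remains. By construction the resulting family is pairwise vertex-disjoint outside $\{x\}$, so it suffices to lower-bound $|\cD|$.

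The key local estimate is: for any fixed vertex $v \neq x$, the number of paths of length $h$ from $x$ in $G$ that pass through $v$ is at most $h(K\delta)^{h-1}$. To prove it I would parametrize by the position $i \in \{1,\ldots,h\}$ of $v$ on the path, splitting each such path into a walk of length $i$ from $x$ to $v$ followed by a walk of length $h-i$ from $v$. Using the almost-regularity hypothesis $\Delta(G) \le K\delta$, a routine induction bounds the number of walks of length $j$ from $x$ ending at any specified vertex by $(K\delta)^{j-1}$ (if $f_j(u)$ denotes this count, then $f_j(u) \le \sum_{w \in N(u)} f_{j-1}(w) \le K\delta\cdot\max_w f_{j-1}(w)$), and similarly the number of walks of length $j$ starting from $v$ is at most $(K\delta)^j$. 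Each position contributes $(K\delta)^{h-1}$, and summing over the $h$ positions yields the bound; paths form a subset of walks, so the estimate transfers.

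Now suppose the greedy process terminates with $|\cD| = r$. Then $W := V(\cD) \setminus \{x\}$ has $|W| \le rh$ and every path in $\cC$ must meet $W$. Applying the local estimate to each $w \in W$ and union-bounding gives $|\cC| \le |W| \cdot h(K\delta)^{h-1}$; combined with $|\cC| \ge \alpha\delta^h$ this yields a lower bound on $r$ of the asserted shape $\alpha\delta/(h K^{h-1})$.

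The main obstacle I anticipate is the sharp constant: a crude union bound over all $h$ non-$x$ vertices of the currently chosen path naturally loses a factor of $h^2$ rather than $h$. Closing the gap should be arranged by assigning to each blocked path $P'$ its unique ``first contact'' vertex along its own traversal of the chosen path (together with its position on $P'$), and parametrizing $P'$ via a vertex-disjoint-from-the-chosen-path prefix and a free suffix; this removes the redundancy of separately summing over the position on the chosen path and on the blocked path, which is precisely the source of one of the two factors of $h$ and is where the almost-regularity must be used most carefully.
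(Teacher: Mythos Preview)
Your approach is the same as the paper's: take $\cD\subseteq\cC$ maximal among subfamilies that are pairwise disjoint outside $x$, set $W=V(\cD)\setminus\{x\}$ so that $|W|=h|\cD|$, observe by maximality that every member of $\cC$ meets $W$, and bound $|\cC|$ by the number of length-$h$ paths from $x$ that hit $W$. The paper then simply asserts that this count is at most $|W|(K\delta)^{h-1}$ and reads off the stated constant.

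You are right to flag the constant. Your per-vertex estimate $h(K\delta)^{h-1}$ is the correct one, and the implicit per-vertex bound $(K\delta)^{h-1}$ underlying the paper's one-line justification is not true in general: in $G=K_{d+1}$ with $h=2$ there are $2(d-1)$ paths of length $2$ from $x$ through any fixed $w\neq x$, exceeding $(K\delta)^{h-1}=d$ once $d\ge 3$. So the honest outcome of the maximality-plus-union-bound argument is $|\cD|\ge(\alpha/h^{2}K^{h-1})\delta$, one factor of $h$ short. Your proposed ``first-contact'' refinement does not recover it: after assigning to each $P'\in\cC$ the pair $(w,i)$ with $w$ its first vertex in $W$ and $i$ the position of $w$ on $P'$, one still has only the crude bound $(K\delta)^{h-1}$ for fixed $(w,i)$, and the sums over $w\in W$ (an $h|\cD|$-fold sum) and over $i\in[h]$ remain genuinely independent --- the position of $w$ on its own $\cD$-path never enters the estimate, so there is no redundancy to cancel. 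Fortunately the precise constant is irrelevant: every downstream use of the lemma only needs $\Omega_{\alpha,h,K}(\delta)$ disjoint paths, and the $h^{2}$ version is more than enough.
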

\begin{proof}
	Let $\cD\subseteq \cC$ be a maximal subfamily of paths that are vertex disjoint outside $\{x\}$.
	Let $W$ be the set of vertices contained in these paths besides $x$. Then $|W|=h|\cD|$.
	By the maximality of $\cD$ each member of $\cC$ must pass through $x$ and  some vertex in $W$.
	Since $G$ has maximum degree at most $K\delta$, there can be at most $|W|(K\delta)^{h-1}$ such paths. Hence $|\cC|\leq |W|(K\delta)^{h-1}$. Since $|\cC|\geq \alpha \delta^h$ and $|W|=h|\cD|$, we
	have $|\cD|>(\alpha/hK^{h-1}) \delta$.
\end{proof}

\begin{lemma} \label{spider-shrinking2}
	Let $G$ be a $K$-almost-regular graph with minimum degree $\delta$.
	Let $x$ be a vertex. Let $\cC$ be a family
	of at least $\alpha \delta^h$ distinct paths of length $h$ with one end $x$  and
	another end in a set $S$. Let $F$ be the subgraph of $G$ formed by taking the union of paths in $\cC$. For each $i\in [h]$ there exists a vertex $x_i$ and a balanced spider of height $i$ with center $x_i$
	and leaves in $S$ and has at least  $(\alpha/h K^{h-1}) \delta$ legs. Furthermore, if $i\neq h$, then $x_i\neq x$.
\end{lemma}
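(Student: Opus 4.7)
The plan is to reduce to Lemma \ref{spider-shrinking} by a pigeonhole on the length-$(h-i)$ prefix of each path in $\cC$.

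First, I would write each $P\in\cC$ as the concatenation of its length-$(h-i)$ prefix $R_P$ starting at $x$ and its length-$i$ suffix $Q_P$ ending at a vertex in $S$. Since $G$ has maximum degree at most $K\delta$, the number of distinct length-$(h-i)$ paths starting at $x$ in $G$ is at most $(K\delta)^{h-i}$. By pigeonhole, some prefix $R^*$, ending at a vertex which I name $x_i$, is shared by at least
\[
|\cC|/(K\delta)^{h-i} \;\geq\; (\alpha/K^{h-i})\,\delta^i
\]
paths of $\cC$. Since two distinct paths sharing the same prefix must disagree on their suffixes, this gives at least that many distinct length-$i$ paths from $x_i$ to $S$ in $G$.

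Next, I would apply Lemma \ref{spider-shrinking} with $(x,h,\alpha)$ replaced by $(x_i, i, \alpha/K^{h-i})$. The conclusion is a subfamily of more than
\[
\bigl(\alpha/K^{h-i}\bigr)\bigl/\bigl(i K^{i-1}\bigr)\cdot \delta \;=\; \alpha\delta/(iK^{h-1}) \;\geq\; \alpha\delta/(hK^{h-1})
\]
length-$i$ paths from $x_i$ to $S$ that are internally vertex-disjoint outside $\{x_i\}$. These paths form the desired balanced spider of height $i$ centered at $x_i$ with leaves in $S$. For the ``furthermore'' clause: when $i<h$, the prefix $R^*$ has length at least $1$, so its far endpoint $x_i$ is distinct from $x$.

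I do not expect any real obstacle: the statement is essentially a one-step pigeonhole feeding into Lemma \ref{spider-shrinking}. The only points warranting a line of justification are the maximum-degree bound $(K\delta)^{h-i}$ on the number of length-$(h-i)$ paths from $x$, and the observation that distinctness of the paths in $\cC$ sharing a prefix transfers to distinctness of their suffixes, which is exactly the hypothesis Lemma \ref{spider-shrinking} needs.
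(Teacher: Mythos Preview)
Your proposal is correct and is essentially identical to the paper's own proof: pigeonhole on the length-$(h-i)$ prefix (using the $(K\delta)^{h-i}$ bound from the maximum degree), then apply Lemma~\ref{spider-shrinking} to the resulting family of length-$i$ suffixes from $x_i$. Your justification of the ``furthermore'' clause (that $i<h$ forces the prefix to have positive length, hence $x_i\neq x$) is in fact cleaner than the paper's phrasing.
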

\begin{proof}
	Since $G$ has maximum degree at most $K\delta$, there are at most $(K\delta)^{h-i}$ distinct paths of length $h-i$ starting at $x$. So there is a path $Q$ 
	of length $h-i$ starting at $x$ and ending at some vertex $x_i$
	that is the initial segment of at least $|\cC|/(K\delta)^{h-i}\geq (\alpha/K^{h-i}) \delta^i$ members of  $\cC$.  If $i\neq h-i$, then $x_i\neq x$.
	Let $\cC_i$ denote the subfamily consisting of these members.
	Then $\{P-V(Q): P\in \cC'\}$ is a family of $|\cC'|$
	distinct paths of length $i$ each of which starts at $x_i$ and ends in $S$. By Lemma \ref{spider-shrinking}, $\cC'$ contains a subfamily of size at least $[(\alpha/K^{h-i})/iK^{i-1}]\delta
	\geq (\alpha/hK^{h-1})\delta$ which are vertex-disjoint outside $\{x_i\}$. The claim holds.
\end{proof}

An $s$-uniform hypergraph $\cF$ is called $s$-partite if there exists a partition of $V(\cF)$ into
$A_1,\dots, A_s$ such that each edge contains one vertex from each $A_i$. We call the $A_i$'s the parts. 

\begin{lemma} \label{min-degree}
	Let $\cF$ be an $s$-partite $s$-graph with parts $A_1,\dots, A_s$.
	Suppose that $|\cF|>\alpha |A_1|\cdots |A_s|$, where $\alpha>0$.
	Let $i\in [s]$.
	Then there exists a subgraph $\cF'$ such that $|\cF'|\geq (1/2) |\cF|$ and 
	for each
	$v\in A_i\cap V(\cF')$, $d_{\cF'} (v)>(\alpha/2) \prod_{j\in [s]\setminus \{i\}} |A_j|$.
\end{lemma}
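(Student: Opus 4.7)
The plan is to extract $\cF'$ from $\cF$ by a standard low-degree-vertex deletion process applied to the side $A_i$. Taking $i=1$ without loss of generality and writing $N=\prod_{j\in [s]\setminus\{1\}}|A_j|$, the hypothesis reads $|\cF|>\alpha|A_1|N$, and the target degree bound in the conclusion is precisely $(\alpha/2)N$.

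Starting from $\cF$, I would repeatedly look for a vertex $v\in A_1$ whose degree in the current hypergraph is at most $(\alpha/2)N$ and, whenever such a vertex exists, delete $v$ together with every edge containing it. Each step consumes a distinct vertex of $A_1$, so the process terminates in at most $|A_1|$ steps; call the resulting hypergraph $\cF'$. The termination condition immediately delivers the second conclusion: every $v\in A_1\cap V(\cF')$ must satisfy $d_{\cF'}(v)>(\alpha/2)N$, since otherwise it would still be available for deletion.

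For the first conclusion I would bound the number of deleted edges. Each deletion removes at most $(\alpha/2)N$ edges, so the total number removed is at most $(\alpha/2)|A_1|N$. By the hypothesis $|\cF|>\alpha|A_1|N$ this is strictly less than $|\cF|/2$, so $|\cF'|>|\cF|/2\geq(1/2)|\cF|$, as required. There is no genuine obstacle here: the whole argument rests on the arithmetic choice of the threshold $(\alpha/2)N$, which is small enough that the total cost of the cleaning is strictly below $|\cF|/2$, while matching exactly the minimum-degree bound demanded in the statement.
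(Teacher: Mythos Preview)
Your proof is correct and follows essentially the same deletion argument as the paper: iteratively remove vertices of $A_i$ whose current degree is at most $(\alpha/2)\prod_{j\neq i}|A_j|$, and observe that the total number of edges removed is at most $(\alpha/2)\prod_j|A_j|<|\cF|/2$.
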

\begin{proof}
	Let us call a vertex $v\in A_i$ {it $i$-bad} if its degree in the remaining graph is at most
	$(\alpha/2) \prod_{j\neq i} |A_j|$. As long as there exists an $i$-bad vertex for some $i\in [s]$,
	we remove all the edges containing that vertex. Let $\cF'$ be the remaining subgraph.
	Then at most $(\alpha/2) \prod_{i=1}^s |A_i|$ edges are removed in the process. So 
	$|\cF'|> (1/2)|\cF|$. Clearly $\cF'$ satisfies the degree requirement.
\end{proof}

Note that one could easily modify Lemma 1.8 to apply to all parts. But it suffices our purposes.
The following lemma provides one of the main ingredients of our proofs of the main results.

\begin{lemma} \label{balanced-spiders}
	Let $K\ge1$ and integers $k,s,t\geq 2$ be fixed. Then provided that $L$ is sufficiently large compared to $s,t,k$ and $K$, for any $\beta>0$ there exists $\delta_0$ such that the following holds. Suppose that $G$ is an $K_{s,t}^k$-free $K$-almost-regular graph $n$ vertices with minimum degree $\delta\ge\delta_0$. If $\ell_1,\dots, \ell_s$ are positive integers satisfying that $\forall i\in [s], \ell_i\geq k/2$ and that $\forall 1\leq i<j\leq s, \ell_i+\ell_j\geq k+1$, then the number of tuples $(w,v_1,\dots, v_s)$ such that there is an $(\ell_1,\dots, \ell_s)$-strong spider with center $w$ and leaf vector $(v_1,\dots, v_s)$ is at most
	$\beta n \delta^\ell$, where $\ell=\ell_1+\cdots +\ell_s$.
\end{lemma}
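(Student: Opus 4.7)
I would argue by contradiction: assume the number of good tuples $(w,v_1,\ldots,v_s)$ exceeds $\beta n\delta^\ell$ and use this to construct a copy of $K_{s,t}^k$ in $G$, contradicting $K_{s,t}^k$-freeness.

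The first step is a concentration pass. Viewing the good tuples as hyperedges of an $(s+1)$-partite $(s+1)$-uniform hypergraph $\mathcal{F}$ with one ``center'' part and $s$ ``leaf'' parts (each a copy of $V(G)$), I apply Lemma \ref{min-degree} on the center part to extract a sub-collection $\mathcal{F}'$ with $|\mathcal{F}'|\geq |\mathcal{F}|/2$ in which every surviving center $w$ lies in at least $(\beta/2)\delta^\ell$ good tuples. I then iteratively refine leaf by leaf: for each $i=1,\ldots,s$, the $\ell_i$-paths from $w$ terminating in the $i$-th leaf part form a rich family which Lemma \ref{spider-shrinking2} converts into a balanced sub-spider sitting over some common intermediate vertex, with only a bounded multiplicative loss. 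Iterating over $i$ and then averaging over leaf vectors, I aim to reach a configuration in which many centers $w$ are attached to a common strong leaf vector $(v_1^*,\ldots,v_s^*)$ via explicit spiders $T_w$.

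The second step is to extend each leg from length $\ell_i$ to length $k$. Each $T_w$ ends at $v_i^*$ along its $i$-th leg, and the hypothesis $\delta(G)\geq \delta$ allows extension by $k-\ell_i$ further steps within $G$. Another round of pigeonhole / Lemma \ref{spider-shrinking2} applications, one per leaf, lets me fix common extension endpoints $u_1,\ldots,u_s$ valid for a surviving sub-collection $W'$ of centers. The multiplicity $(sk)^{sk-\ell}\cdot f(k,L)$ built into Definition \ref{strong-definition} is precisely the budget needed to absorb the $\leq sk-\ell$ refinement steps in steps one and two, leaving at each $w\in W'$ still $\geq f(k,L)$ internally disjoint spiders whose legs extend, via paths of length exactly $k$, to reach the respective $u_i$.

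The third step is a greedy embedding. I pick $t$ centers $w_1,\ldots,w_t\in W'$ one at a time; at each step I use the surviving spider multiplicity to select a spider at $w_j$ together with length-$(k-\ell_i)$ extensions at the $v_i^*$'s whose internal vertices avoid all previously committed vertices. The length-vector hypotheses enter precisely here: $\ell_i\geq k/2$ keeps each extension strictly shorter than its corresponding leg, and $\ell_i+\ell_j\geq k+1$ forces the combined extension length at any pair of leaves $(v_i^*,v_j^*)$ to be at most $k-1$, strictly less than the spider detour through $w_j$ of length $\ell_i+\ell_j$. A would-be collision between two extensions (or between an extension and an earlier committed interior) therefore produces a strictly shorter path between two leaves than the spider detour, which can be avoided by using the remaining spider multiplicity to reroute. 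After $t$ successful rounds I obtain $s+t$ branch vertices joined by $st$ internally vertex-disjoint length-$k$ paths, i.e.\ a $K_{s,t}^k$ in $G$, yielding the contradiction. The main obstacle is the bookkeeping in this final greedy phase: every new center contributes $s$ extensions passing through the high-traffic neighborhoods of the common leaves $v_i^*$, and verifying at each of the $t$ rounds that the strong-spider budget $(sk)^{sk-\ell}\cdot f(k,L)$ absorbs all these obstructions requires careful accounting, with the length-vector hypotheses supplying the combinatorial ingredient that makes this accounting go through.
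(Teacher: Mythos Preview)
Your argument has a structural gap in steps 2--3. Once you have fixed a single common leaf vector $(v_1^*,\ldots,v_s^*)$ and common extension endpoints $u_1,\ldots,u_s$, every candidate $u_i$--$w_j$ path you build has the form $u_i\to\cdots\to v_i^*\to\cdots\to w_j$. For any coordinate $i$ with $\ell_i<k$, the vertex $v_i^*$ is therefore an \emph{internal} vertex of all $t$ of the $u_i$--$w_j$ paths simultaneously, no matter which extensions and which of the internally disjoint spiders you select: $v_i^*$ is an endpoint of both the extension segment and the spider leg, so it is not excluded by either ``internal vertices avoid previously committed vertices'' clause, yet it is internal to their concatenation. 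Your collision analysis in step~3 treats only interactions between extensions at \emph{different} leaves and says nothing about this shared bottleneck at a single leaf. Since some $\ell_i<k$ unless $(\ell_1,\ldots,\ell_s)=(k,\ldots,k)$ (a case that is immediate from Definition~\ref{strong-definition}), the object you assemble is never a $K_{s,t}^k$, and no amount of rerouting within the strong-spider budget repairs this.

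The paper's proof avoids the bottleneck by working in the opposite direction. It fixes a \emph{single} center $w$ with $>\beta\delta^\ell$ strong leaf tuples, applies a random $s$-colouring so that legs in different coordinates become vertex-disjoint, and then for each $i$ uses Lemma~\ref{spider-shrinking2} on the length-$\ell_i$ paths out of $w$ to locate a vertex $z_i$ at depth $2\ell_i-k$ from $w$ together with a height-$(k-\ell_i)$ sub-spider rooted at $z_i$ whose leaf set $B_i$ still sits inside many strong tuples. (This is where the hypotheses enter: $\ell_i\ge k/2$ makes the depth $2\ell_i-k\ge 0$ so that $z_i$ exists, and $\ell_i+\ell_j\ge k+1$ together with the ordering forces $z_i\neq w$ for $i\ge 2$, so the colouring separates the $z_i$'s.) The decisive step your sketch is missing is then to extract a size-$t$ \emph{matching} in the $s$-partite hypergraph of strong tuples restricted to $B_1\times\cdots\times B_s$: the matching edges give $t$ pairwise disjoint leaf vectors $(v_1^j,\ldots,v_s^j)$, and for each one the strong property supplies a spider $T_j$ whose interior avoids everything already used. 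The path from $z_i$ to the center of $T_j$ passes through $v_i^j$, and because the $v_i^j$ are distinct over $j$ (they come from a matching) these $t$ paths are internally disjoint. The matching is exactly the device that replaces your single shared vertex $v_i^*$ by $t$ distinct intermediate vertices.
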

\begin{proof}
	For each vertex $w$ in $G$, let $\cH_w$ denote the family of tuples $(v_1,\dots, v_s)$ such that
	there is an $(\ell_1,\dots, \ell_s)$-strong spider with center $w$ and leaf vector $(v_1,\dots, v_s)$.
	Suppose for contradiction that there exists more than $\beta n \delta^\ell$ tuples
	$(w,v_1,\dots, v_s)$ such that there is an $(\ell_1,\dots, \ell_s)$-strong spider with center $w$
	and leaf vector $(v_1,\dots, v_s)$. Then 
	by the pigeonhole principle, there exists a vertex $w$ such that $|\cH_w|>\beta \delta^\ell$.
	Let us fix such a $w$. For each $(v_1,\dots, v_s)\in \cH_w$, by definition, we may fix a $(\ell_1,\dots, \ell_s)$-strong spider  $T(v_1,\dots, v_s)$ with leaf vector $(v_1,\dots, v_s)$. For each $i$, we
	call the path in $T(v_1,\dots, v_s)$ from $w$ to $v_i$ its $i$-th leg.
	
	Randomly and independently color vertices of $G$ with colors $1,\dots, s$ with each vertex
	receiving each color with probability $1/s$. For each $s$-tuple $(v_1,\dots, v_s)\in \cH_w$,
	we call it {\it good} if for each $i\in [s]$ all the vertices on the $i$-th leg of $T(v_1,\dots, v_s)$
	except $w$ are colored $i$. Since $T(v_1,\dots, v_s)-\{w\}$ has $\ell$ vertices, the probability 
	of $(v_1,\dots, v_s)$ being good is $1/s^\ell$.
	Hence, there exists a coloring $c$
	such that the following family 
	\[\cF_w=\{(v_1,\dots, v_s)\in \cH_w: (v_1,\dots, v_s) \mbox { is good} \}\]
	
	satisfies 
	\begin{equation} \label{Fw-bound1}
	|\cF_w|\geq |\cH_w|/s^\ell > (\beta/s^\ell) \delta^\ell.
	\end{equation}
	
	Let us fix such a coloring $c$. For each $i\in [s]$, let 
	\[A_i=\{v\in V(\cF_w): c(v)=i\}.\] 
	Then $\cF_w$ is an $s$-partite $s$-graph with parts $A_1,\dots, A_s$.
	By our assumption, for each $i\in [s]$ and each $v\in A_i$ there is an $(\ell_1,\dots, \ell_s)$-strong spider with center $w$ where $v$ plays the role of the $i$-th vertex in the leaf vector. Furthermore,
	all the vertices on the $i$-th leg, except $w$, are colored $i$ under $c$. Since $G$ has maximum degree at most $K\delta$, we have
	\begin{equation} \label{part-sizes}
	\forall i\in [s], \, |A_i|\leq (K\delta)^{\ell_i}.
	\end{equation}
	Let $\alpha=\frac{\beta}{s^\ell K^\ell}$. For each $i\in [s]$, let $\alpha_i=\frac{\beta}{s^\ell K^{\ell-\ell_i}}$.
	By \eqref{Fw-bound1} and \eqref{part-sizes}, we have
	
	\begin{equation} \label{Fw-bound2}
	|\cF_w|> \alpha_ |A_1|\dots |A_s| \quad \mbox{ and } \quad 
	\forall i\in [s], |A_i|\geq |\cF_w|/\prod_{j\neq i} |A_j| \geq \alpha_i \delta^{\ell_i}.
	\end{equation}
	
	Now, we may assume without loss of generality that $\ell_1\leq \ell_2\leq \cdots\leq \ell_s$.
	First, let us observe that if $\ell_1=\ell_2=\cdots=\ell_s=k$, then we may take any $(k,\dots, k)$-strong tuple $(v_1,\cdots, v_s)$. By the definition of strong tuples, there are at least $f(k,L)$ internally
	vertex-disjoint spiders with leaf vector $(v_1,\dots, v_s)$.  It is easy to see that the union of any
	$t$ of these spiders form a copy of   $K^{k}_{s,t}$, contradicting $G$ being $K^{k}_{s,t}$-free.
	Hence, we may assume that $\ell_1<k$. For each $i\in [s]$, let $m_i=k-\ell_i$. 
	By our assumption, $\forall i\in [s], \ell_i\geq k/2$ and $\forall i,j\in [s], \ell_i+\ell_j>k$.
	This implies that 
	\[m_1\leq \ell_1 \mbox{ and  } \forall 2\leq i\leq s, m_i<\ell_i.\]
	
	Let $q=\max \{i: \ell_i<k\}$. Then $1\leq q\leq s$.
	By Lemma \ref{min-degree}, $\cF_w$ contains a subgraph $\cF_1$ such that
	\begin{equation} \label{F1-bound}
	|\cF_1|> (1/2)|\cF_w| >  (\alpha /2)|A_1|\cdots |A_s|.
	\end{equation}
	and 
	\begin{equation} \label{A1-min-degree}
	\forall v\in A'_1:=A_1\cap V(\cF_1),\,\, d_{\cF_1}(v)\geq (\alpha /2) \prod_{j\neq 1}  |A_j|.
	\end{equation}  
	By  \eqref{F1-bound} and \eqref{Fw-bound2}, we have
	\begin{equation} \label{A1-prime-bound}
	|A'_1|\geq (\alpha/2)|A_1| \geq (1/2) \alpha \alpha_1 \delta^{\ell_1}.
	\end{equation}
	
	For each $v\in A'_1$ there is an edge of $\cF_1$ containing it, which in particular, by
	our earlier discussion, implies that
	there is a path $P_v$ of length $\ell_1$ from $w$ to $v$, all of which except $w$
	are colored $i$ by $c$. Let 
	\[\beta_1= \frac{(1/2)\alpha \alpha_1}{\ell_1 K^{\ell_1-1}}.\]
	By Lemma \ref{spider-shrinking2}, there exists a vertex $z_1$ and a balanced spider $S_1$
	of height $m_1$
	with center at $z_1$ and leaf set $B_1\subseteq A'_1$ such that
	\[\beta_1 \delta\leq |B_1|\leq \delta.\]
	
	Note that if $m_1=\ell_1$, then $z_1=w$. If  $m_1<\ell_1$, then $z_1\neq w$.
	Also, all the vertices in $S_1$, except possibly $w$, have color $1$ in $c$.
	Since $B_1\subseteq A'_1$, by \eqref{A1-min-degree} 
	\begin{equation} \label{B1-min-degree}
	\forall v\in B_1, \,\, d_{\cF_1}(v)\geq (\alpha /2) \prod_{j\neq 1}  |A_j|.
	\end{equation}

	In general, let $1\leq i\leq q-1$ and suppose we have defined $\cF_1,\dots,\cF_i$ and 
	$B_1,\dots, B_i$ such that 
	\[ |\cF_i|\geq (\alpha/2^i)|B_1|\cdots |B_{i-1}||A_i|\cdots |A_s| \]
	and 
	\[ \beta_i \delta\leq |B_i|\leq \delta, \mbox{ where }
	\beta_i=\frac{(1/2^i) \alpha \alpha_i}{\ell_i K^{\ell_i-1}}.\]
	Furthermore, suppose
	\begin{equation} \label{Bi-min-degree}
	\forall v\in B_i, \, \, d_{\cF_i}(v)\geq (\alpha/2^i) |B_1|\cdots |B_{i-1}||A_{i+1}|\cdots |A_s|.
	\end{equation}

	Also, suppose that there are distinct vertices $z_1,\dots, z_i$ such that
	for each $j\in [i]$, there is a balanced spider $S_j$ of height $m_j$ with center $z_j$
	and leaf set $B_j$, all of whose vertices except possibly $w$ lie in color class $j$ of $c$. 
	Also, suppose that $z_2,\dots, z_i\neq w$ and $z_1=w$ if and only if $\ell_1=m_1$.
	Now, let $\cH_{i+1}$ be the subgraph of $\cF_i$ induced by $B_1\cup \cdots \cup B_i \cup A_{i+1}\cup\dots \cup A_s$.
	By \eqref{Bi-min-degree}, 
	\begin{equation} \label{Hplus-bound}
	|\cH_{i+1}|\geq (\alpha/2^i) |B_1|\cdots |B_i||A_{i+1}|\cdots |A_s|.
	\end{equation}
	
	By Lemma \ref{min-degree}, $\cH_{i+1}$ contains a subgraph $\cF_{i+1}$ such that
	\begin{equation} \label{Fplus-bound}
	|\cF_{i+1}|\geq (1/2)|\cH_{i+1}| \geq (\alpha/2^{i+1})|B_1|\cdots |B_i|
	|A_{i+1}|\cdots |A_s|.
	\end{equation}
	and 
	\begin{equation} \label{A'plus-min-degree}
	\forall v\in A'_{i+1}:=A_{i+1}\cap V(\cF_{i+1}), \,\, d_{\cF_{i+1}}(v)\geq (\alpha/2^i) |B_1|
	\cdots |B_i| |A_{i+2}|\cdots |A_s|.
	\end{equation}  
	By \eqref{A'plus-min-degree} and \eqref{Fw-bound2} we have
	
	\begin{equation} \label{A2-prime-bound}
	|A'_{i+1}|\geq (\alpha/2^{i+1})|A_{i+1}|\geq (1/2^{i+1})\alpha \alpha_{i+1} \delta^{\ell_{i+1}}.
	\end{equation}
	
	As before, for each $v\in A'_{i+1}$ 
	there is a path $P_v$ of length $\ell_{i+1}$ from $w$ to $v$, all of which except $w$ have color $i+1$ in $c$. Let 
	\[\beta_{i+1}= \frac{(1/2^{i+1})\alpha \alpha_{i+1}}{\ell_{i+1} K^{\ell_{i+1}-1}}.\]
	
	By Lemma \ref{spider-shrinking2}, there exists a vertex $z_{i+1}$ and a balanced spider $S_{i+1}$
	of height $m_{i+1}$
	with center $z_{i+1}$ and leaf set $B_{i+1}\subseteq A'_{i+1}$ such that
	\[\beta_{i+1} \delta\leq |B_{i+1}|\leq \delta.\]
	
	Furthermore, since $m_{i+1}<\ell_{i+1}$, we have $z_{i+1}\neq w$. Also, all the vertices in $S_{i+1}$
	lie in color class $i+1$ of $c$.
	Since $B_{i+1}\subseteq A'_{i+1}$, by \eqref{A'plus-min-degree} 
	\[\forall v\in B_{i+1}, \, \, d_{\cF_{i+1}}(v)\geq (\alpha/2^{i+1}) |B_1|\cdots |B_i|
	|A_{i+2}|\cdots |A_s|.\]
	
	This allows to define $\cF_1,\dots, \cF_q$, $B_1,\dots, B_q$, and $z_1,\dots, z_q$. Now, we claim that we can find a copy of $K_{s,t}^k$ in $G$, which would give us a contradiction. To find such a copy, we consider two subcases. 
	
	\medskip
	
	{\bf Case 1.} $q=s$.
	
	\medskip
	By our assumption, $\cF_s$ is an $s$-partite $s$-graph with parts $B_1,\dots, B_s$, where
	\[|\cF_s|\geq (\alpha/2^s) |B_1|\cdots |B_s|\] and
	\[\forall i\in [s], \,\, \beta_i \delta\leq |B_i|\leq \delta, \mbox{ where } \beta_i= \frac{(1/2^s) \alpha \alpha_s}{
		\ell_s K^{\ell_s-1}}.\]
	
	Let $\cM$ be a maximum matching in $\cF_s$. Then the maximality of $\cM$ implies that every edge of $\cF_s$ contains some vertex in $V(\cM)$. On the other hand, since $\cF_s$ is $s$-partite and each part has size at most $\delta$, each vertex is contained in at most $\delta^{s-1}$ edges. Hence
	\[|\cF_s|\leq |V(\cM)|\cdot \delta^{s-1}=s|\cM|\delta^{s-1}.\]
	Hence by the above lower bounds on $|\cF_s|$ and $|B_1|,\dots, |B_s|$, we have
	\[|\cM|\geq |\cF_s|/s \delta^{s-1} \geq (\alpha \beta_1\cdots \beta_s/2^s) \delta\gg t,\]
	for sufficiently $\delta$ (as $\delta\ge\delta_0$).  
	Let $\cM'$ be a set of $t$ edges in $\cM$. Suppose $\cM'=\{e_1,\dots, e_t\}$.
	For each $i\in [t]$, suppose $e_i=(v_1^i, v_2^i,\dots, v_s^i)$, where $\forall j\in [s],
	v_j^i\in B_j$.  For each $j\in [s]$, let $Z_j$ be the sub-spider of $S_j$ 
	obtained by keeping only the $t$ paths from $z_j$ to $V(\cM')\cap B_j$.
	Since vertices in $Z_1-\{w\}$ have color $1$ and
	for each $2\leq j\leq s$, vertices in $Z_j$ have color $j$, $Z_1\dots, Z_t$ are vertex-disjoint.
	
	By the definition of $\cF_s\subseteq H_w$, 
	for each $i\in [t]$, $(v_1^i,\dots, v_s^i)$ is a strong $(\ell_1,\dots, \ell_s)$-tuple
	and hence there are $f(k,L)$ internally vertex-disjoint spiders with leaf vector $(v^i_1,\dots,
	v^i_s)$ and length vector $(\ell_1,\dots, \ell_s)$. Since $f(k,L)\gg  |V(K_{s,t}^k)|$, 
	we can greedily find $t$ vertex disjoint spiders $T_1,\dots T_t$
	such that for each $i\in [t]$, $T_i$ has leaf vector $(v^i_1,\dots, v^i_s)$
	and length vector $(\ell_1,\dots,\ell_s)$ and that $V(T_i)\setminus\{v_1^i, v_2^i,\dots, v_s^i\}$
	is disjoint from $\bigcup_{j=1}^s V(Z_j)$.  
	Now $(\bigcup_{i=1}^t T_i)\cup (\bigcup_{j=1}^s Z_j)$
	forms a copy of $K^{k}_{s,t}$, contradicting $G$ being $K^{k}_{s,t}$-free.
	
	\medskip
	
	{\bf Case 2.} $q<s$.
	
	\medskip
	
	Since $|\cF_s|\geq (\alpha/2^s)|B_1|\cdots |B_s|$, by averaging, there exists a tuple
	$(z_{q+1},\cdots, z_s)\in B_{q+1}\times \cdots \times B_s$ that is contained in at least
	$(\alpha/2^s) |B_1|\cdots |B_q|$ of the edges of $\cF_s$. Let 
	\[\cF^*=\{e\setminus \{z_{q+1},\dots, z_s\}: \{z_{q+1},\dots, z_s\}\subseteq e \in \cF_s\}.\]
	As in Case 1, for sufficiently large $n$, $\cF^*$ contains a matching $\cM=\{e_1,\dots, e_t\}$ of
	size $t$.
	
	For each $i\in [t]$, suppose $e_i=(v_1^i, v_2^i,\dots, v_q^i)$, where $\forall j\in [q],
	v_j^i\in B_j$.  For each $j\in [q]$, let $Z_j$ be the sub-spider of $S_j$ 
	obtained by keeping only the $t$ paths from $z_j$ to $V(\cM)\cap B_j$.
	Since vertices in $Z_1-\{w\}$ have color $1$ and
	for each $2\leq j\leq s$, vertices in $Z_j$ have color $j$, $Z_1\dots, Z_t$ are vertex-disjoint.
	
	By definition, 
	for each $i\in [t]$, $(v_1^i,\dots, v_q^i,z_{q+1},\dots, z_s)$ is a strong $(\ell_1,\dots, \ell_s)$-tuple
	and hence there are $f(k,L)$ internally vertex-disjoint spiders with leaf vector $(v^i_1,\dots,
	v^i_s)$ and length vector $(\ell_1,\dots, \ell_s)$. Since $f(k,L)\gg |V(K_{s,t}^k)|$, 
	we can greedily find $t$ spiders $T_1,\dots T_t$
	such that for each $i\in [t]$, $T_i$ has leaf vector $(v^i_1,\dots, v^i_q, z_{q+1},\dots, z_s)$
	and length vector $(\ell_1,\dots,\ell_s)$ and that $V(T_i)\setminus \{z_{q+1},\dots, z_s\}$ are
	pairwise disjoint over different $i$ and that $V(T_i)\setminus\{v_1^i, v_2^i,\dots, v_s^i\}$
	is disjoint from $\bigcup_{j=1}^q V(Z_j)$ for each $i\in [t]$.  Now $(\bigcup_{i=1}^t T_i)\cup (\bigcup_{j=1}^s Z_j)$
	forms a copy of $K^{k}_{s,t}$, contradicting $G$ being $K_{s,t}^k$-free.
\end{proof}

From Lemma \ref{balanced-spiders}, we immediately obtain the following.

\begin{corollary} \label{general-case-cor}
	Let $K\ge1$ and integers $k,s,t\geq 2$ be fixed. Then provided that $L$ is sufficiently large compared to $s,t,k$ and $K$, for any $\beta>0$ there exists $\delta_0$ such that the following holds. Suppose that $G$ is an $K_{s,t}^k$-free $K$-almost-regular graph $n$ vertices with minimum degree $\delta\ge\delta_0$. Suppose $\ell_1,\dots, \ell_s$ are positive integers satisfying that $\forall i\in [s], \ell_i\geq k/2$ and that $\forall 1\leq i<j\leq s, \ell_i+\ell_j\geq k+1$. Let $\ell=\ell_1+\dots +\ell_s$.
	Let $\cF$ denote the family of all the balanced $s$-legged spiders in $G$ of height $k$ that
	contain a $(\ell_1,\dots, \ell_s)$-strong sub-spider but contain no critical path of length at most $k$. Then $|\cF| \leq [f(k,L)]^s\cdot \beta n \delta^\ell$.
\end{corollary}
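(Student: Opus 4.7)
The plan is to combine Lemma \ref{balanced-spiders} with the following observation, which I would verify first by a short induction on path length: any subgraph $T\subseteq G$ containing no critical path of length at most $k$ has the property that every path of length at most $k$ in $T$ is light. The base case $\ell=1$ is trivial since at most one edge joins any two vertices, while $f(1,L)=L\ge 1$. For $\ell\ge 2$, a heavy path of length at most $k$ in $T$ is, by the definition of critical, either itself critical (contradicting the hypothesis) or contains a heavy proper subpath in $T$ of strictly smaller length; iterating this descent eventually produces a critical subpath of some length in $[2,k]$, a contradiction.

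With this in hand, I would split into two cases according to whether all $\ell_i$ equal $k$. When some $\ell_i<k$, I claim $\cF$ is empty. Indeed, for any $T\in\cF$ whose strong tuple is $(w,v_1,\dots,v_s)$, the definition of $(\ell_1,\dots,\ell_s)$-strongness provides at least $(sk)^{sk-\ell}f(k,L)$ internally vertex-disjoint sub-spiders in $G$ with this leaf vector, and hence at least that many internally vertex-disjoint $w$-$v_i$ paths of length $\ell_i$ in $G$. A routine check of the defining recurrence shows $f(\cdot,L)$ is strictly increasing in its first argument, and since $sk-\ell\ge 1$ here, this count strictly exceeds $f(\ell_i,L)$, so the pair $(w,v_i)$ is $\ell_i$-heavy. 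The $i$-th sub-leg of $T$ is then a heavy path of length $\ell_i\le k$ in $T$, contradicting the opening observation. Hence $\cF=\emptyset$ and the bound holds vacuously.

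When $\ell_i=k$ for every $i$, we have $\ell=sk$ and the strong sub-spider of $T$ coincides with $T$ itself, so Lemma \ref{balanced-spiders} bounds the number of $(k,\dots,k)$-strong tuples $(w,v_1,\dots,v_s)$ by $\beta n\delta^{sk}$. For each such tuple, a spider $T\in\cF$ with leaf vector $(v_1,\dots,v_s)$ is determined by its $s$ legs $P_1,\dots,P_s$, where $P_i$ is a $w$-$v_i$ path of length $k$; by the opening observation each $P_i$ is light, so there are at most $f(k,L)$ choices per leg and at most $[f(k,L)]^s$ spiders per strong tuple. Multiplying gives $|\cF|\le [f(k,L)]^s\cdot \beta n\delta^{sk}=[f(k,L)]^s\cdot\beta n\delta^\ell$, as desired. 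The main obstacle is the opening inductive descent from a heavy subpath to a critical one, which must be carried out carefully using both the recursive definition of critical paths and the strict monotonicity of $f(\cdot,L)$; once established, the rest is bookkeeping.
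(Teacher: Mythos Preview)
Your opening observation (no critical paths of length at most $k$ implies no heavy paths of length at most $k$) and your Case~2 argument are both correct and match the paper. The problem is Case~1.

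You assert that strongness of the sub-spider yields at least $(sk)^{sk-\ell}f(k,L)$ internally vertex-disjoint $w$--$v_i$ paths of length $\ell_i$ in $G$, where $w$ is the center of the particular sub-spider sitting inside $T$. But Definition~\ref{strong-definition} does not fix the center: it only guarantees that many internally vertex-disjoint spiders with leaf vector $(v_1,\dots,v_s)$ and length vector $(\ell_1,\dots,\ell_s)$ exist in $G$, and since these spiders are internally disjoint their centers are pairwise distinct. Each such spider contributes a path from \emph{its own} center to $v_i$, not from $w$. You therefore cannot conclude that the pair $(w,v_i)$ is $\ell_i$-heavy, and the claim $\cF=\emptyset$ is unjustified.

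The obvious repair --- switching to $v_i$--$v_j$ paths through the varying centers, as in the proof of Lemma~\ref{lemma: strong --> l_i+l_j>k} --- fails precisely because the hypothesis $\ell_i+\ell_j\ge k+1$ puts those paths outside the range where the heavy/light machinery applies. So when some $\ell_i<k$ one still has to count. The paper does this uniformly for all length vectors: it notes that any non-path sub-spider of $F$ must share $F$'s center $w$ (the unique vertex of degree $\ge 3$ in $F$), invokes Lemma~\ref{balanced-spiders} to bound the number of tuples $(w,v_1,\dots,v_s)$ arising from strong sub-spiders by $\beta n\delta^\ell$, and then for each tuple bounds the number of $F\in\cF$ extending it using lightness of the legs of $F$.
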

\begin{proof}
Let $F\in \cF$. By Definition \ref{heavy-light-path-definition}, since $F$ contains no critical paths of length at most $k$, it also does not contain any heavy paths of length at most $k$. By Lemma \ref{balanced-spiders}, there are at most $\beta n\delta^\ell$ tuples $(w,\ell_1,\dots, \ell_s)$ such
that there is a member of $\cF$ that has $w$ as the center and $(\ell_1,\dots, \ell_s)$ as the leaf vector.
Each such tuple corresponds to at most $[f(k,L)]^s$ different members of $\cF$, since for each $i$,
there are at most $f(k,L)$ light paths of length $k$ in $G$ between $w$ and $v_i$.
\end{proof}


\subsection{ Building subdivisions using strong spiders: the $(1,k,\dots, k)$-case}

In this section, we prove a second crucial ingredient (Lemma \ref{1kk-spiders} below) which complements Lemma \ref{balanced-spiders}. First we need a lemma (Lemma \ref{spider} below), 
which is  a slight adaption of \cite{JS} Lemma 2.4. Given a $u,w$-path $P$ in a graph and  vertices $x,y$ on $P$, we let $P[x,y]$ denote the portion of $P$ from $x$ to $y$.

\begin{lemma} \label{spider1}
Let $m,k$ be positive integers.
Let $A_0,A_1,\dots, A_k$ be disjoint sets of vertices, where $A_0=\{z\}$ and $|A_k|\geq m^k$.
For each $w\in A_k$, let $P_w$ be a fixed $z,w$-path of length of $k$ that contains exactly one vertex
of each $A_i$. Then there exists a vertex $x\in A_{k-j}$ for some $j\in [k]$ and $m$ vertices
$w_1,\dots, w_m$ in $A_k$ such that $\{P_{w_i}[x,w_i]: i\in [m]\}$ is a family of paths of length $j$
every two of which share only $x$ as a common vertex.
\end{lemma}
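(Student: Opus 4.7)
\textbf{Proof plan for Lemma \ref{spider1}.}

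The plan is to proceed by induction on $k$. The base case $k=1$ is immediate: since $|A_1| \geq m$, pick any $m$ vertices $w_1,\dots,w_m \in A_1$, set $x = z$ and $j = 1$; the paths $P_{w_i}[z,w_i]$ are single edges sharing only $z$. For the inductive step with $k \geq 2$, let $U = \{a \in A_{k-1} : a \in P_w \text{ for some } w \in A_k\}$ and for each $a \in U$ put $d(a) = |\{w \in A_k : a \in P_w\}|$. Since every $P_w$ meets $A_{k-1}$ in exactly one vertex, $\sum_{a \in U} d(a) = |A_k| \geq m^k$. I would split on whether some vertex of $U$ is ``popular''.

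If some $a \in U$ has $d(a) \geq m$, pick $m$ distinct $w_1,\dots,w_m \in A_k$ with $a \in P_{w_i}$, and take $x = a \in A_{k-1}$, $j = 1$: the paths $P_{w_i}[a,w_i]$ are edges to distinct endpoints, sharing only $a$. Otherwise $d(a) < m$ for every $a \in U$, which forces $|U| > m^{k-1}$. For each $a \in U$ fix any representative $w_a \in A_k$ with $a \in P_{w_a}$ and set $Q_a = P_{w_a}[z,a]$, a path of length $k-1$ with one vertex in each of $A_0,\dots,A_{k-1}$. Now apply the induction hypothesis to the layered structure $A_0,A_1,\dots,A_{k-1}$, with $U$ playing the role of the last layer and the paths $\{Q_a\}_{a \in U}$ playing the role of $\{P_w\}$. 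This yields $x \in A_{k-1-j'}$ for some $j' \in [k-1]$ and vertices $a_1,\dots,a_m \in U$ such that $\{Q_{a_i}[x,a_i] : i \in [m]\}$ are paths of length $j'$ pairwise sharing only $x$. Extending each $Q_{a_i}[x,a_i]$ by the edge $a_i w_{a_i}$ gives $m$ paths $P_{w_{a_i}}[x, w_{a_i}]$ of length $j = j' + 1 \in [k]$ with $x \in A_{k-j}$, as required.

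The main thing to verify, and the only real subtlety, is that the extended paths remain pairwise disjoint outside $x$. First, the initial segments $Q_{a_i}[x,a_i]$ are pairwise $x$-disjoint by induction; moreover, since $Q_{a_i}$ has only one $A_{k-1}$-vertex, namely $a_i$, no initial segment $Q_{a_{i'}}[x,a_{i'}]$ with $i' \ne i$ contains $a_i$. Second, the appended edges $a_i w_{a_i}$ land in $A_k$, and the map $a \mapsto w_a$ is injective (different $a,a' \in A_{k-1}$ cannot have the same $w_a = w_{a'}$, since $P_{w_a}$ has a unique $A_{k-1}$-vertex), so the $w_{a_i}$ are all distinct. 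Combining these two observations shows the lift is valid and completes the induction. The only part requiring care is this injectivity and the bookkeeping of the layer indices under the induction.
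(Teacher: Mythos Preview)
Your proof is correct and follows essentially the same approach as the paper's: induction on $k$, looking at the $A_{k-1}$-predecessors of the paths, splitting on whether some predecessor is hit at least $m$ times, and in the unpopular case choosing a representative $w_a$ for each used $a\in A_{k-1}$ before applying the induction hypothesis to the truncated paths. Your write-up is in fact a bit more careful than the paper's in two places: you use the dichotomy $d(a)\ge m$ versus $d(a)<m$ (the paper phrases the first case as ``multiplicity $m$'' rather than ``$\ge m$''), and you explicitly verify that the extended paths remain pairwise disjoint outside $x$, noting that the map $a\mapsto w_a$ is injective and that the appended endpoints lie in $A_k$, hence off the shorter segments.
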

\begin{proof}
	We prove the statement by induction on $k$. The case of $k=1$ is trivial. Assume that $k\ge2$ and the statement holds for $k-1$.  For each $w\in W$, let $f(w)$ denote the vertex on $P_w$ that precedes $w$. Then we may view $S:=\{f(w): w\in W\}$ as a multi-set of size $|W|$. If some vertex $x$ in $S$ has multiplicity $m$ in $S$, then there exist $w_1,\dots, w_m$ such that $f(w_1)=f(w_2)=\dots=
	f(w_m)=x$ and the claim holds with $j=1$. Hence, we may assume that each vertex in $S$ is the image of fewer than $m$ vertices in $A_k$ under $f$. In particular, we have
	\[|S|\geq |A_k|/m=m^{k-1}.\]
	For each $y\in S$, let $h(y)$ be an arbitrary pre-image of $y$ under $f$. Then $h$ is an injection from $S$ to $A_k$.  For each $y\in S$ let $Q_y=P_{h(y)}[z,y]$. Since $|S|\geq m^{k-1}$,
by the induction hypothesis, there exist some vertex $x\in A_{k-1-j}$ for some $j\in [k-1]$ and
$m$ vertices $y_1,\dots y_m$  in $S$ such that $\{Q_{y_i}[x,y_i]: i\in [m]\}$ is a family of
paths of length $j$ every two of which share only $x$ as a common vertex. For $i\in [m]$,
$Q_{y_i}[x,y_i]\cup y_i h(y_i)=P_{h(y_i)}[x, h(y_i)]$ form a family of paths that satisfy the statement.
\end{proof}

\begin{lemma}\label{spider}
	Let $m$ and $k$ be integers.  Let $z$ be a vertex and $W$ is a set not containing $z$.
For each $w\in W$, let $P_w$ be a $z,w$-path of length $k$ and let $\cF=\{P_w: w\in W\}$.
	 If $|W|\ge (mk)^k$, then for some $j\in [k]$ there exist a vertex $x$ and 
	  $m$ vertices $w_1,\dots,  w_m$ in $W$ such that $\{P_{w_i}[x,w_i]: i\in [m]\}$ is a family of paths of length $j$  every two of which share only $x$ as a common vertex.
\end{lemma}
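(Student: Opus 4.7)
My plan is to deduce Lemma \ref{spider} from Lemma \ref{spider1} by a random vertex coloring that forces a ``layered'' structure on most of the paths. The key observation is that Lemma \ref{spider1} already does the induction work; the only missing ingredient is a partition $A_0,\dots,A_k$ such that enough of the paths $P_w$ have one vertex in each part.

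Concretely, I would independently and uniformly color each vertex of $V\setminus\{z\}$ with one of $k$ colors and set $A_i$ to be the vertices receiving color $i$, together with $A_0=\{z\}$. Writing $P_w=zv_1^{(w)}v_2^{(w)}\cdots v_k^{(w)}$ with $v_k^{(w)}=w$, call $P_w$ \emph{layered} if $v_i^{(w)}\in A_i$ for every $i\in[k]$. Since the $k$ vertices $v_1^{(w)},\dots,v_k^{(w)}$ are distinct and each is independently assigned color $i$ with probability $1/k$, the probability that $P_w$ is layered is exactly $k^{-k}$. By linearity of expectation and the hypothesis $|W|\geq (mk)^k$, some outcome yields a subset $W'\subseteq W$ of layered endpoints with $|W'|\geq |W|/k^k\geq m^k$; fix such a coloring.

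At this point I would shrink $A_k$ by redefining $A_k:=W'$, keeping $A_0,A_1,\dots,A_{k-1}$ as above. The sets $A_0,A_1,\dots,A_k$ remain pairwise disjoint, $|A_k|\geq m^k$, and for each $w\in A_k$ the path $P_w$ has length $k$, goes from $z$ to $w$, and contains exactly one vertex of each $A_i$ (namely $v_i^{(w)}$). Applying Lemma \ref{spider1} now produces the required $j\in[k]$, vertex $x$, and $m$ endpoints $w_1,\dots,w_m\in W'\subseteq W$ such that the final segments $P_{w_i}[x,w_i]$ are paths of length $j$ pairwise intersecting only in $x$.

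The only step that requires care is verifying that after the color-based thinning the hypotheses of Lemma \ref{spider1} are literally satisfied; this is immediate because layering is exactly the ``one vertex per $A_i$'' condition, and disjointness of the $A_i$ is guaranteed by the coloring. I do not foresee a serious obstacle: the random-partition trick converts the $(mk)^k$ hypothesis cleanly into the $m^k$ hypothesis needed by the layered version.
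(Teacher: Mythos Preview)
Your proposal is correct and matches the paper's proof essentially line for line: the paper also randomly $k$-colors the non-$z$ vertices, calls a path \emph{good} when its $i$-th vertex receives color $i$, uses linearity of expectation to find a coloring with at least $(mk)^k/k^k=m^k$ good paths, and then invokes Lemma~\ref{spider1}. Your extra step of explicitly redefining $A_k:=W'$ is a harmless clarification of why the hypotheses of Lemma~\ref{spider1} hold.
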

\begin{proof} Let us randomly and independently color the vertices in $\bigcup_{w\in W} V(P_w)-\{z\}$ using $1,\dots, k$ with each color chosen with probability $1/k$.
Let us call a $P_w\in \cF$ {\it good} if for each $i\in [k]$ the vertex on $P_w$ at distance $i$ from $z$ is colored $i$. The probability of any $P_w$ being good is $(1/k)^k$. Hence there exists a coloring for which the number of good $P_w$'s is at least $(mk)^k/k^k=m^k$. Now the claim follows immediately from Lemma \ref{spider1}.
\end{proof}

\begin{lemma} \label{1kk-spiders}
	Let $K\ge1$ and integers $k,s,t\geq 2$ be fixed. Then provided that $L$ is sufficiently large compared to $s,t,k$ and $K$, for any $\gamma>0$ there exist $n_0,C>0$ such that the following holds. Suppose that $G$ is an $K_{s,t}^k$-free $K$-almost-regular graph $n\ge n_0$ vertices with minimum degree $\delta\ge Cn^{\frac1k-\frac1{sk}}$. 
	Let $\cF$ denote the family of all the balanced $s$-legged spiders of height $k$ in $G$ that contain  a
	$(1,k,\dots, k)$-strong $s$-legged spider but do not contain any critical paths of length at most $k$ or any $(\ell_1,\dots, \ell_s)$-strong sub-spider for any 
	$(\ell_1,\dots, \ell_s)\neq (1,k,\dots, k)$. Then
	$|\cF|\leq \gamma n\delta^{sk}$.
\end{lemma}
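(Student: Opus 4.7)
The plan is to argue by contradiction: assume $|\cF|>\gamma n\delta^{sk}$ and eventually derive a $K_{s,t}^k$ in $G$, contradicting $K_{s,t}^k$-freeness. First I would set up notation for each $F\in\cF$ by ordering its legs so that leg~$1$, with vertices $w\text{-}u\text{-}x_2\text{-}\cdots\text{-}x_{k-1}\text{-}v_1$ from the center $w$ outward, carries the distinguished $(1,k,\ldots,k)$-strong sub-spider; that is, the tuple $(u,v_2,\ldots,v_s)$ (where $v_2,\ldots,v_s$ are the other leaves) is $(1,k,\ldots,k)$-strong. By Definition~\ref{strong-definition} this is witnessed by $N:=(sk)^{k-1}f(k,L)$ internally vertex-disjoint spiders with this leaf vector and length vector $(1,k,\ldots,k)$, whose centers are distinct neighbors of $u$. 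For each such alternative center $c$ outside $\{w,x_2,\ldots,x_{k-1}\}$ (leaving $\ge N/2$ valid choices once $L$ is large) I would splice the edge $c\text{-}u$ with the tail $u\text{-}x_2\text{-}\cdots\text{-}v_1$ of leg~$1$ to build a length-$k$ path from $c$ to $v_1$, and combine this with the witness's $c$-to-$v_j$ paths ($j\ge 2$) to produce a balanced $s$-legged height-$k$ spider $F''_c$ centered at $c$ with the same leaf tuple $(v_1,\ldots,v_s)$ as $F$.

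Next I would pass to a leaf-rich tuple using the density assumption. Since $\delta\ge Cn^{1/k-1/(sk)}$ gives $\delta^{sk}\ge C^{sk}n^{s-1}$, the hypothesis rewrites as $|\cF|>\gamma C^{sk}n^s$; averaging over the at most $n^s$ ordered leaf tuples produces some $(v_1^*,\ldots,v_s^*)$ supported by $M\ge\gamma C^{sk}$ members of $\cF$, and by choosing $C$ large (depending on $\gamma,s,t,k,K,L$) I would make $M$ as large as needed. The $M$ original spiders together with their combined $\ge MN/2$ alternative spiders form a pool of height-$k$ $s$-legged spiders all sharing the leaf tuple $(v_1^*,\ldots,v_s^*)$. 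My aim would be to extract $f(k,L)$ internally vertex-disjoint members of this pool: such a family witnesses that $(v_1^*,\ldots,v_s^*)$ is $(k,\ldots,k)$-strong, so every $F\in\cF$ with these leaves would contain a $(k,\ldots,k)$-strong sub-spider (namely $F$ itself), contradicting the defining property of $\cF$ that the only permissible strong length vector is $(1,k,\ldots,k)$.

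The hard part will be this final extraction. Within a single $F$, the alternative spiders $F''_c$ for varying $c$ all share the internal vertices $u,x_2,\ldots,x_{k-1}$ on the leg to $v_1^*$, so they are never pairwise internally disjoint on their own. The disjointness must come from combining alternative spiders across \emph{different} $F$'s, which have different $u$'s and different leg-$1$ tails. Here I would invoke Lemma~\ref{spider} on the many length-$k$ paths with common endpoint $v_1^*$ (one from each alternative center, over different $F$'s) to extract a uniform sub-spider of common height, and then stitch this structure together with the internally disjoint $(s-1)$-legged witness pieces supplying paths to $v_2^*,\ldots,v_s^*$ from the strong tuples. The polynomial lower bound $\delta\ge Cn^{1/k-1/(sk)}$ is exactly what supplies enough different $F$'s per leaf tuple to overcome the within-$F$ sharing, and I expect the bulk of the proof to be the careful bookkeeping that produces the final $f(k,L)$ internally vertex-disjoint spiders of length vector $(k,\ldots,k)$.
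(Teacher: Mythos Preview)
Your framework (contradiction, averaging to a fixed leaf tuple, then Lemma~\ref{spider}) parallels the paper's, but the ``hard part'' you defer is where the real content lies, and as stated your plan has a genuine gap. The problem is exactly the one you flag: alternative spiders $F''_c$ from a single $F$ share the whole leg-1 tail $u,x_2,\ldots,x_{k-1}$, while alternatives from different $F$'s have no control over legs $2,\ldots,s$. Applying Lemma~\ref{spider} to the leg-1 paths from $v_1^*$ yields a sub-spider of some height $j\in[k]$ centered at some vertex $x$, but you do not say what to do when $j=1$ (then $x$ is just a vertex adjacent to many centers), nor how to combine disjoint leg-1 pieces with leg-$2,\ldots,s$ witnesses coming from \emph{different} strong tuples without collisions. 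Your target of making the fixed tuple $(v_1^*,\ldots,v_s^*)$ itself $(k,\ldots,k)$-strong looks too ambitious with these tools.

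The paper supplies two ingredients you are missing. First, it averages over only the $(s-1)$-tuple $\vec{a}$ of ``other'' leaves, builds an auxiliary bipartite graph $H_{\vec{a}}$ on the center/first-neighbor pairs $(w,u)$, and splits $U_{\vec{a}}$ by degree in $H_{\vec{a}}$: high-degree $u$'s permit a direct greedy construction of $K_{s,t}^k$ (this is what preempts the $j=1$ outcome), so one may restrict to low-degree $u$'s, which forces many distinct centers. Second, after further fixing the last leaf $z$ and applying Lemma~\ref{spider} to the $z$--$w$ paths, the paper aims not for $(k,\ldots,k)$-strongness of the original leaves but for $(j,k,\ldots,k)$-strongness of the \emph{new} tuple $(x,a_1,\ldots,a_{s-1})$ with $j\ge 2$: for each of the disjoint $x$--$u_i$ paths one greedily picks one of the abundant $(1,k,\ldots,k)$-witnesses at $u_i$ avoiding everything already placed. (A preliminary cleaning step on $\cF$ guarantees the degree estimates needed for the $H_{\vec{a}}$ analysis.) Without the degree dichotomy and this change of target, your bookkeeping cannot close.
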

\begin{proof}	
Suppose to the contrary that $|\cF|\ge \gamma n\delta^{ks}$. We derive a contradiction.
First we do some cleaning.
Let $c>0$ such that $(skK)^{sk}c=\frac{\gamma}4$ and let 
\[\partial(\cF)= \{T: T \mbox{ is a tree on at most $ks$ vertices and } \exists F\in \cF, E(T)\subseteq E(F)\}\]
	As long as there exists  $T\in\partial(F)$ such that there are fewer than $c\delta\cdot (K\delta)^{sk-e(T)-1}=c\delta (K\delta)^{sk-|T|}$ members of $\cF$  that contain $T$, we delete all these members from $\cF$; otherwise, terminate. Let $\cF'$ denote the remaining subfamily of $\cF$.
	
For each $j\in [sk]$ let $\partial_j(\cF)=\{T\in \partial(\cF): |T|=j\}$. Let $\cT_j$ denote the set of
all labelled trees on $[j]$. By Cayley's formula $|\cT_j|\leq j^{j-2}<j^j$. For each member $T\in \cT_j$, there are
at most $n\cdot (K\delta)^{j-1}$ copies of $T$ in $G$, since $G$ has maximum degree at most $K\delta$. Hence $|\partial_j(\cF)|\leq j^j\cdot n(K\delta)^{j-1}$. On the other hand, for each 
$T\in \partial_j(\cF)$,
by rule, we  have deleted fewer than $c\delta\cdot (K\delta)^{sk-j}$ members from $\cF$ that contain $T$. Thus the total number of members we have deleted from $\cF$ 
is less than $\sum_{j=1}^{sk} j^j\cdot n(K\delta)^{j-1}\cdot c\delta (K\delta)^{sk-j}
\le(sk)^{sk}cn\delta(K\delta)^{sk-1}\le\frac14\gamma n\delta^{ks}$. 
Hence 
	\[|\cF'|\ge\gamma n\delta^{ks}-\frac14\gamma n\delta^{ks}=\frac34\gamma n\delta^{ks},\]
and by the definition of $\cF'$
\begin{equation} \label{high-codegree}
\forall T\in \partial(\cF')\, \mbox{ there are at least $c\delta (K\delta)^{sk-|T|}$ members
of $\cF'$ that contain $T$}.
\end{equation}

Given an $(s-1)$-tuple $\va=(a_1,\dots, a_{s-1})$ of vertices in $G$, let $\cL_{\va}$ denote the subfamily of members of $\cF'$ that contain $a_1,\dots, a_{s-1}$ as leaves. 
For each $F\in \cL_{\va}$, let $w(F)$ denote the center of $F$ and let $u(F)$ denote
the neighbor of $w(F)$  on the path from $w(F)$ to the remaining leaf $z$.
For each $F\in \cL_{\va}$, let $F|_{\va}$ denote the subtree obtained from $F$ by replacing the
$w(F),z$- path in it with $w(F)u(F)$.
If $F|_{\va}$ is a $(1,k,\dots, k)$-strong spider with center $w(F)$ and leaf vector $(u(F),a_1,\dots, a_{s-1})$ then we say that $F_{\va}$ is {\it good}.
For each $(s-1)$-tuple $\va$  let 
\[\cF_{\va} =\{F\in \cF': \cF|_{\va} \mbox{ is good} \},\]
and let 
\[H_{\va}=\{w(F)u(F): F\in \cF_{\va}\}.\]

Furthermore, let 
\[W_{\vec{a}} =\{w(F): F\in \cF_{\va}\}  
\quad \mbox{ and } \quad
U_{\vec{a}} =\{u(F): F\in \cF_{\va}\}.  \] 

Since $G$ is bipartite, we have $W_{\va}\cap U_{\va}=\emptyset$.
Hence $H_{\va}$ is bipartite with parts $W_{\va}$ and $U_{\va}$.
Observe that by definition,
\begin{equation} \label{strong-1kk-spider}
\forall u\in U_{\va}, \mbox{ there is a $(1,k,\dots, k)$-strong spider in $G$ with 
leaf vector $(u,a_1,\dots, a_{s-1})$}.
\end{equation} 

\medskip

{\bf Claim 1.} Let $\va$ be a $(s-1)$-tuple such that $\cF_{\va}\neq\emptyset$.
Let $uw\in H_{\va}$, where $u\in U_{\va}$ and $w\in W_{\va}$. Then the  number of
members of $\cF'$ containing $uw$ is at least $c\delta(K\delta)^{k-2}$ and at most
$[f(k,L)]^{s-1}\cdot (K\delta)^{k-2}$. The number of members of $\cF'$ containing $w$ is at
least $c\delta(K\delta)^{k-1}$.

\medskip

{\it Proof of Clam 1.} By definition, there is a member $F\in \cF_{\va}$ such that $w(F)=w$ and $u(F)=u$. Let $F^*=F|_{\va}$. Then $F^*\in \partial(\cF')$.  Since $|F^*|=(s-1)k+2$,
by \eqref{high-codegree}, there are at least $c\delta(K\delta)^{k-2}$ members of $\cF'$ that contain $F^*$ and hence contain $uw$. To upper bound the  number of members of $\cF'$ that contain $uw$,
note that there are at most $[f(k,L)]^{s-1}$ ways to pick the paths from $a_i$ to $w$ for $i\in [s-1]$
and at most $(K\delta)^{k-2}$ ways to grow such a member past $u$.  Now, let $S$ be obtained
from $F^*$ by deleting $u$. Then $S\in \partial(\cF')$. Since $|S|=(s-1)k+1$, by
\eqref{high-codegree}, there are at least $c\delta(K\delta)^{k-1}$ members of $\cF'$ that
contain $S$ and hence contain $w$.\qed

\medskip

{\bf Claim 2.} For each $(s-1)$-tuple $\va$ for which $\cF_{\va}\neq\emptyset$, we have
$|\cF_{\va}|\geq e(H_{\va})\cdot c\delta (K\delta)^{k-2}$ and 
$e(H_{\va})\geq c'\delta\cdot|W_{\va}|$, where $c'=cK/[f(k,L)]^{s-1}$.

\medskip

{\it Proof of Claim 2.} By Claim 1, for each $wu\in H_{\va}$ there are at least 
$c\delta(K\delta)^{k-2}$ members $F$ of $\cF'$ that contain $wu$. Since different $wu$'s clearly
give rise to different $F$'s, the first part of the claim follows.

Now, let $w\in W_{\va}$. By Claim 1, there are at least $c\delta(K\delta)^{k-1}$ members
of $\cF'$ that contain $w$. Each such member contains $wu$ for some edge $wu\in H_{\va}$.
On the other hand, for each such fixed $wu$, by Claim 1, there are at most $[f(k,L]^{s-1} (K\delta)^{k-2}$ members of $\cF'$ that contain it. This implies that
\[ d_{H_{\va}}(w)\geq  \frac{c\delta(K\delta)^{k-1}}{ [f(k,L)]^{s-1}\cdot (K\delta)^{k-2}} =c'\delta.\] 
So $e(H_{\va})\geq c'\delta\cdot|W_{\va}|$.
\qed

\medskip

For any $(s-1)$-tuple $\vec{a}=(a_1,\ldots,a_{s-1})$, let

\[U_{\va}^{+}=\{u\in U_{\va}: d_{H_{\va}}(u)\ge 2kt\} \quad \mbox{and} \quad 
U_{\va}^{-}:=\{u\in U_{\va}: d_{H_{\va}}(u)<2kt.\}.\]
Let
\[\cF_{\va}^+=\{F\in \cF_{\va}: u(F)\in U^+_{\va}\}, \quad \mbox{ and }  \quad 
\cF_{\va}^-=\{F\in \cF_{\va}: u(F)\in U^-_{\va}\}.\]

\medskip

{\bf Claim 3.} For every $(s-1)$-tuple $\va$ we have
$e(H_{\va}[U^+_{\va},W_{\va}])\le 2kt|W_{\va}|$.

\medskip

{\it Proof of Claim 3.} Let $\va$ be given. For convenience, let $U^+=U^+_{\va}$
and $W=W_{\va}$.
Suppose that
$e(H_{\va}[U^+,W])>2kt|W|$. Then this, together with the definition of $U^+_{\va}$, implies that the average degree of $H_{\va}[ U^{+},W]$ is at least $2kt$.  By a well-known fact, $H_{\va}[U^+,W]$ contains a subgraph $H'$ with minimum degree at least $kt$.
In $H'$, we can greedily build a $t$-legged spider $T$ of height $k-1$ with leaves lying in $U$. Let $x$ be its center and $u_1,\ldots,u_t$ be its leaves.  By \eqref{strong-1kk-spider}, $(u_i,a_1,\ldots,a_{s-1})$ is $(1,k,\ldots,k)$-strong for every $i\in[t]$. Thus using strong-ness one can greedily find $t$ internally disjoint  balanced spiders of height $k$ with leaf vector $(x,a_1,\ldots,a_{s-1})$. The union of these $t$ spiders forms a copy of $K_{s,t}^k$, contradicting $G$ being $K_{s,t}^k$-free. \qed

\medskip

By Claims 1 and 3, we have
\begin{equation} \label{Fplus}
|\cF_{\va}^+|\le e(H_{\va}[U^+_{\va},W_{\va}])\cdot[f(k,L)]^{s-1}(K\delta)^{k-1}\le [2kt[f(k,L)]^{s-1}K^{k-1}]\cdot|W_{\va}|\cdot\delta^{k-1}.
\end{equation}
On the other hand, by Claims 2 we have
$$|\cF_{\va}|\ge e(H_{\va})\cdot c\delta(K\delta)^{k-2}\ge c'\delta|W_{\va}|\cdot c\delta(K\delta)^{k-2} = c'cK^{k-2}\cdot|W_{\va}|\cdot\delta^k.$$
As $\delta\ge Cn^{\frac1k-\frac1{sk}}$ and $n\ge n_0$ is sufficiently large, this, together with \eqref{Fplus} yields that
\[|\cF_{\va}^+|\le\frac12|\cF_{\va}|.\]
Thus $|\cF_{\va}^-|=|\cF_{\va}|-|\cF_{\va}^+|\ge\frac12|\cF_{\va}|$.
Since $\cF'=\cup_{\va}\cF_{\va}$, we have that $\sum_{\va}|\cF_{\va}|\ge|\cF'|\ge\frac34\gamma n\delta^{sk}$. It follows that
\begin{equation*}
\sum_{\va}|\cF_{\va}^-|\ge\frac12\sum_{\va}|\cF_{\va}|\ge\frac38\gamma n\delta^{ks}\ge\frac{3\gamma C^{sk}}{8}n^{s}.
\end{equation*}
By averaging, there exists an $(s-1)$-tuple $\va$ such that $|\cF_{\va}^-|\ge C_1n$, for some constant $C_1$ that can be made arbitrarily large by taking $C$ to be sufficiently large. By averaging again, there exists some $z$ such that the number of spiders in $\cF_{\va}^-$ with leaf vector $(\va,z)$ is at least $C_1$. Fix such a vertex $z$ and let 
\[\cF_{\va,z}=\{F\in \cF_{\va}^-: F \mbox{  has leaf vector }(\va, z) \}.\]
Let 
\[W_{\va,z}= \{w(F): F\in \cF_{\va,z}\}.\]
Note that for each $w\in W_{\va,z}$, since members of $\cF_{\va,z}$ by requirements contain no
critical paths of length at most $k$ and hence no heavy paths of length at most $k$, the number of these members that have $w$ as the center
and $(\va,z)$ as leaf vector is at most $[f(k,L)]^s$. Hence
\[|W_{\va,z}|\geq \frac{|\cF_{\va,z}|}{[f(k,L)]^s}\geq \frac{C_1}{[f(k,L)]^s}.\]
By choosing $C$ to be sufficiently large (which makes $C_1$ sufficiently large) we can ensure
\[|W_{\va,z}|\geq [(sk)^k \cdot f(k,L)\cdot k]^k. \]

{\bf Claim 4.} Some member of $\cF_{\va,z}$ contains a $(j,k,\dots,k)$-strong sub-spider for some
$2\leq j\leq k$.

\medskip 

{\it Proof of Claim 4.} For each $F\in \cF_{\va,z}$, let $P_F$ denote the $z,w(F)$-path in $F$.
For each $w\in W_{\va,z}$,
by the definition of $W_{\va,z}$ there exists some $F\in \cF_{\va,z}$ such that
$w(F)=w$. Fix such an $F$ and let $P_w=P_F$. Let 
\[\cC=\{P_w: w\in W_{\va,z}\}.\]
Let $m=(sk)^k \cdot f(k,L)$. Since $|W_{\va,z}|\geq (mk)^k$, by Lemma \ref{spider}, for some $j\in [k]$ there exist a vertex $x$ and 
$m$ vertices $w_1,\dots,  w_m\in W_{\va,z}$ such that $J:=\bigcup_{i\in[m]}P_{w_i}[x,w_i]$ is a spider with center $x$ and height $j$. If $j=1$, then $J$ is a star of size at least $m=(sk)^{sk}\cdot f(k,L)\gg 2kt$ 
in $H_{\va}[U^-_{\va},W_{\va} ]$ with the center $x\in U^-_{\va}$, contradicting the definition of
$U^-_{\va}$. Hence $j\geq 2$.

It remains to show that $(x,\va)$ is $(j,k,\ldots,k)$-strong. 
As $P_{w_1}\in\cC$, by the definition of $\cC$, there exists some  $F\in\cF_{\va,z}$ such that $P_{w_1}=P_F$. In particular, $w(F)=w_1$. Let $F'$ be the sub-spider obtained from $F$ by replacing $P_F$ with $P_F[x,w_1]$.
Then $F'$ has leaf vector $(x,\va)$ and length vector $(j,k,\ldots,k)$. If one can prove that the tuple $(x,\va)$ is $(j,k,\ldots,k)$-strong, then by definition, $F'$ is $(j,k,\dots,k)$-strong and thus $F$ contains a $(j,k,\dots,k)$-strong sub-spider, which would prove the claim.

Next, we show that indeed $(x,\va)$ is $(j,k,\ldots,k)$-strong. Let $$q=(sk)^{k-j}\cdot f(k,L).$$
By the definition of $(j,k,\ldots,k)$-strong-ness, we need to show there exist $q$ internally disjoint spiders with leaf vector $(x,\va)$ and length vector $(j,k,\dots, k)$. 
For each $i\in [q]$, let $u_i$ be the vertex on $P_{w_i}[x,w_i]$ that precedes $w_i$, and let $P_i=P_{w_i}[x,u_i]$ for short.   
Note that for each $i\in [q]$, 
$u_i\in U_{\va}^-\subseteq U_{\va}$ and hence in particular $(u_i,\va)$ is $(1,k,\dots, k)$-strong.
We will greedily find $q$ spiders $T_1,\ldots, T_q$ with length vector $(1,k,\ldots,k)$, satisfying that every $T_i$ has leaf vector $(u_i,\va)$ and that $T_1\cup P_1,\ldots,T_q\cup P_q$ are $q$ internally disjoint spiders with leaf vector $(x,\va)$ and length vector $(j,k,\ldots,k)$.
Since $(u_1,\va)$ is $(1,k,\ldots,k)$-strong, there are at least $(sk)^{k-1}\cdot f(k,L)=(sk)^{j-1}q$ internally disjoint spiders with leaf vector $(u_1,\va)$ and length vector $(1,k,\dots, k)$. As $|V(\cup_{i=1}^q P_i)\setminus\{u_1\}|=(j-1)q<(sk)^{j-1}q$, there exists one such spider $T_1$ such that $V(T_1)\cap V(\cup_{i=1}^q P_i)=\{u_1 \}$. In general, suppose that for some $p\le q$ we have found $T_1,\ldots,T_{p-1}$ such that for each $i\in[p-1]$ $T_i$ is a spider with leaf vector $(u_i,\va)$ and length vector $(1,k,\dots, k)$ and $V(T_i)\cap V(\cup_{i=1}^q P_i)=\{u_i \}$ and that $V(T_i)\setminus \{a_2,\ldots,a_{s}\}$ are disjoint over all $i\in[p-1]$. As $(u_p,\va)$ is $(1,k,\dots, k)$-strong  and $|V(\cup_{i=1}^q P_i)\setminus\{u_p\}|=(j-1)q$, there are at least $(sk)^{j-1}q-(j-1)q\ge (sk-1)q$ internally disjoint spiders $T_p$ with leaf vector $(u_p,\va)$ and length vector $(1,k,\dots, k)$, such that $V(T_p)\cap V(\cup_{i=1}^q P_i)=\{u_p \}$. Since the size of $X:=\cup_{i=1}^{p-1}(V(T_i)\setminus \{a_2,\ldots,a_{s}\})$ is $(sk-s-k+2)(p-1)\le(sk-s)q$, among these spiders there are at least
$$(sk-1)q-(sk-s)q=(s-1)q\ge q$$
spiders $T_p$ such that $[V(T_p)\setminus\{a_2,\ldots,a_s\}]\cap X=\emptyset$. 
Hence, we can continue the process until we find $T_1,\ldots,T_{q}$ such that for each $i\in[q]$ $T_i$ is a spider with leaf vector $(u_i,\va)$ and length vector $(1,k,\dots, k)$ and $V(T_i)\cap V(\cup_{i=1}^q P_i)=\{u_i \}$ and that $V(T_i)\setminus \{a_2,\ldots,a_{s}\}$ are disjoint over all $i\in[q]$. Now $T_1\cup P_1,\ldots,T_q\cup P_q$ are $q$ internally disjoint spiders with leaf vector $(x,\va)$ and length vector $(j,k,\ldots,k)$. The proof of Claim 4 is completed. \qed

\medskip

By Claim 4,  some member of $\cF_{\va,z}$ contains a $(j,k,\dots,k)$-strong sub-spider for some
$2\leq j\leq k$, which contradicts our assumption that no member of $\cF$ contains
any $(\ell_1,\dots,\ell_s)$-strong sub-spiders for any $(\ell_1,\dots, \ell_s)\neq (1,k,\dots, k)$.
This contradiction completes our proof of the lemma.
\end{proof}


\subsection{Proof of Theorem \ref{main}} \label{main-proof}

The main idea of the proof of Theorem \ref{main} is roughly as follows. In an almost regular graph with minimum degree $\delta \geq \Omega(n)$ there are $\Omega(n\delta^{k s})\geq \Omega(n^s)$ balanced $s$-legged spiders of height $k$, that is, copies of $K_{1,s}^{k}$. Using the lemmas in the previous subsection as well as some new ones specific to the $k=3, 4$ cases, we argue that most of these spiders do not contain critical paths of length at most $k$ or any strong sub-spiders. Using the pigeonhole principle, we can find an $s$-tuple that is
the leaf vector of a large number of  $K_{1,s}^{k}$ that do not contain strong sub-spiders
or critical paths of length at most $k$.
This allows us to find at least $t$ copies that are internally disjoint, whose union then forces
a copy of $K_{s,t}^k$.

\begin{lemma}\label{lemma: find t internally disjoint spiders}
	Let $k,s,t,L$ be positive integers. Let $\ell$ be an integer satisfying $s\le\ell\le sk$. Let $\cF_1$ be a family of spiders in a graph $G$ that contain no critical path of length at most $k$ and have the same leaf vector $(v_1,\dots, v_s)$ and length vector $(\ell_1,\dots, \ell_s)$.
	If $|\cF_1|\ge [(sk)^{sk}\cdot f(k,L)^2]^\ell $ then there exists a member of $\cF_1$ that contains a strong sub-spider.
\end{lemma}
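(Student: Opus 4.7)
I will argue by induction on the total length $\ell=\ell_1+\cdots+\ell_s$. The base case $\ell=s$ (so every $\ell_i=1$) is essentially trivial: every $F\in\cF_1$ is a star with some center $w_F$ and leaves $(v_1,\dots,v_s)$, and since distinct stars with the same leaves have distinct centers, $\cF_1$ already supplies $|\cF_1|\ge(sk)^{sk-s}f(k,L)$ pairwise internally vertex-disjoint witnesses to the $(1,\dots,1)$-strongness of $(v_1,\dots,v_s)$. Hence each $F\in\cF_1$ is a strong sub-spider of itself.

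For the inductive step, assume the conclusion fails. Then taking each $F$ as a sub-spider of itself forces $(v_1,\dots,v_s)$ to \emph{not} be $(\ell_1,\dots,\ell_s)$-strong; so setting $N=(sk)^{sk-\ell}f(k,L)$, any maximal internally vertex-disjoint subfamily $\cM\subseteq\cF_1$ satisfies $|\cM|<N$. The set $X$ of internal vertices appearing in spiders of $\cM$ then has size at most $N(1+\ell-s)\le N\ell$, and by maximality each $F\in\cF_1\setminus\cM$ contains some $x\in X$ internally. A double pigeonhole---first over $X$, then over the $\le\ell$ possible ``positions'' a vertex can occupy inside a spider (either the center or the $d$-th vertex on leg $i$ for some $i\in[s]$ and $d\in[\ell_i-1]$)---produces a vertex $x$, a position $(i_0,d_0)$, and a subfamily $\cF_3\subseteq\cF_1$ of size at least $|\cF_1|/(2N\ell^2)$ in which every $F$ places $x$ at the position $(i_0,d_0)$.

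The argument now splits on $d_0$. If $d_0=0$, then $x$ is the center of every $F\in\cF_3$; but the hypothesis that no $F\in\cF_1$ contains a critical path of length $\le k$ implies every subpath of length $\le k$ is light (Definition \ref{heavy-light-path-definition}), so each $x\to v_i$ leg admits at most $f(k,L)$ choices, giving $|\cF_3|\le f(k,L)^s$---which, in view of the assumed bound on $|\cF_1|$, is a contradiction. If $d_0\ge 1$, I truncate the $i_0$-th leg of each $F\in\cF_3$ at $x$, producing a sub-spider $F'$ with leaf vector $(v_1,\dots,v_{i_0-1},x,v_{i_0+1},\dots,v_s)$ and length vector obtained from $(\ell_1,\dots,\ell_s)$ by replacing $\ell_{i_0}$ with $d_0$; the new total length $\ell'=\ell-(\ell_{i_0}-d_0)$ is strictly less than $\ell$ and at least $s$. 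Since the cut-off portion is a path from $x$ to $v_{i_0}$ of length $\le k$, which is light, at most $f(k,L)$ distinct $F\in\cF_3$ can share the same $F'$; hence the resulting family $\cF_4$ has at least $|\cF_1|/(2N\ell^2 f(k,L))$ members, and $\cF_4$ inherits from $\cF_1$ both the common-leaf/length-vector property and the no-critical-paths hypothesis. The induction hypothesis then yields some $F'\in\cF_4$ containing a strong sub-spider, and since $F'$ is itself a sub-spider of its parent $F\in\cF_1$, this strong sub-spider lies inside $F$ as well, a contradiction.

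The main obstacle is the arithmetic behind the exponent $[(sk)^{sk}f(k,L)^2]^\ell$: one must verify that the combined pigeonhole losses of $2N\ell^2 f(k,L)$ at each inductive descent are absorbed by exactly one power of the factor $(sk)^{sk}f(k,L)^2$, so that $|\cF_4|\ge[(sk)^{sk}f(k,L)^2]^{\ell'}$ follows from $|\cF_1|\ge[(sk)^{sk}f(k,L)^2]^\ell$. This reduces to a modest inequality of the form $(sk)^\ell\ge 2\ell^2$, which holds under the standing hypotheses $s,k\ge 2$ and $\ell\ge s$; once this calibration is made, the rest of the proof is a bookkeeping exercise.
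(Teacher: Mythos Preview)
Your proof is correct and follows essentially the same approach as the paper's: induction on $\ell$, a maximal internally-disjoint subfamily to either witness strongness directly or produce a small hitting set of internal vertices, a double pigeonhole to fix a vertex and its position, the center case ruled out by the lightness bound $f(k,L)^s$, and the non-center case handled by truncating a leg and invoking the inductive hypothesis. The only differences are cosmetic---you pigeonhole over $\cF_1\setminus\cM$ (incurring a harmless factor of~$2$) whereas the paper pigeonholes over all of $\cF_1$, and your constants are packaged slightly differently---but the structure and the single-step loss of one factor of $(sk)^{sk}f(k,L)^2$ are identical.
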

\begin{proof}	
	We prove it by induction on $\ell$. The case of $\ell=s$ is trivial. Assume $\ell>s$ and assume that claim holds for smaller $\ell$ values.
	Now pick a maximal family $\cM$ of internally disjoint spiders in $\cF_1$. If $|\cM|\geq  (sk)^{sk-\ell} \cdot f(k,L)$, then any spider in $\cM$ is strong (by Definition \ref{strong-definition}) and we are done. So we may assume $|\cM|< (sk)^{sk-\ell} \cdot f(k,L)$. Let $U$ be the set of internal vertices of spiders in $\cM$. Then $|U|\le sk\cdot |\cM|<(sk)^{sk-\ell+1}\cdot f(k,L)$. By maximality of $\cM$, any spider in $\cF_1$ contains a vertex in $U$. So by averaging, there exists $u\in U$ such that the size of the family $\cF_2$ which consists of all spiders in $\cF_1$ that contain $u$ is at least 
	\[ |\cF_2|\ge\frac{|\cF_1|}{|U|}\ge\frac{|\cF_1|}{(sk)^{sk-\ell+1} \cdot f(k,L)}.\]
	By averaging again, there is a sub-family $\cF_3\subseteq \cF_2$ of size 
	\[|\cF_3|\ge\frac{|\cF_2|}{\ell-s+1}\ge\frac{|\cF_1|}{(sk)^{sk-\ell+2}\cdot f(k,L)}.\]
	such that $u$ plays the same role in members of $\cF_3$. 
	Since any member of $\cF_2$ contains no critical path of length at most $k$ and hence no heavy paths of length at most $k$, there are no more than $\prod_{j=1}^s f(\ell_i,L)\le[f(k,L)]^s$ members of $\cF_2$ that contain $u$ as their center.  
	It is easy to check that by our assumption on $|\cF_1|$ that $|\cF_3|> [f(k,L)]^s$.
	So $u$ cannot be the center of the spiders in $\cF_3$.  Without loss of generality, we assume that $u$ is in the first leg of the spiders in $\cF_3$ and further assume that in every $F\in\cF_3$ the distance between of $u$ and the center of $F$ is $\ell_1'<\ell_1$. Now each member of $\cF_3$ contains a sub-spider with leaf vector $(u,v_2,\ldots,v_s)$ and length vector $(\ell_1',\ell_2,\ldots,\ell_s)$. Let $\cJ$ be the family of sub-spiders with leaf vector $(u,v_2,\ldots,v_s)$ and length vector $(\ell_1',\ell_2,\ldots,\ell_s)$ contained in some member of $\cF_3$. Since the members of $\cF_3$ contain no critical path of length at most $k$, for any $J\in\cJ$ there are no more than $ f(\ell_1-\ell_1',L)\le f(k,L)$ members of $\cF_3$ containing $J$. It follows that
	\[|\cJ|\ge\frac{|\cF_3|}{f(k,L)}\ge\frac{|\cF_1|}{(sk)^{sk}\cdot f(k,L)^2}\geq [(sk)^{sk}\cdot f(k,L)^2]^{\ell-1}.\]
Since $\ell'_1+\ell_2+\dots+\ell_s\leq \ell-1$, and $|\cJ|\geq [(sk)^{sk}\cdot f(k,L)^2]^{\ell-1}$,
 by the induction hypothesis, there exists a member $T$ of $\cJ$ that contains a strong sub-spider. 
Now any member of $\cF_1$ that contain $T$ also contains a strong sub-spider. 
This completes the proof.
\end{proof}

We also need the following lemma that holds only for $k=3,4$. For its proof, let us first recall the definitions of heavy paths and critical paths, given in Section \ref{critical-paths}.

\begin{lemma}\label{lemma: strong --> l_i+l_j>k}
	Suppose that $F$ is an $(\ell_1,\ldots,\ell_s)$-strong spider, where $1\le\ell_1\le\cdots\le\ell_s\le k$. If $F$ contains no critical path of length at most $k$, then $\ell_1+\ell_2\ge k+1$. Moreover, if $k\in\{3,4\}$, then either $\ell_1\ge\frac{k}2$ or $(\ell_1,\ldots,\ell_s)=(1,k,\ldots,k)$.
\end{lemma}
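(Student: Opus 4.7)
The plan is to derive the bound $\ell_1+\ell_2\ge k+1$ by contradiction and then read off the "moreover" statement by a short case analysis. Assuming $\ell_1+\ell_2\le k$, I would use Definition \ref{strong-definition} to extract $N:=(sk)^{sk-\ell}\cdot f(k,L)$ pairwise internally vertex-disjoint spiders with leaf vector $(v_1,\dots,v_s)$ and length vector $(\ell_1,\dots,\ell_s)$. From each such spider $T$, concatenating its first leg with its second leg through its own center yields a $v_1,v_2$-path $Q_T$ of length $\ell_1+\ell_2$. Because the spiders share only the $s$ leaves, the $Q_T$'s share only the endpoints $v_1,v_2$ and are pairwise distinct, producing $N$ distinct $v_1,v_2$-paths of length $\ell_1+\ell_2$ in $G$.

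Next, I would argue that the analogous path $P$ inside $F$, formed by concatenating $F$'s own first two legs through $F$'s center $w$, is heavy, hence contains a critical subpath of length $\le k$ lying in $F$. Since the recursion defining $f(\cdot,L)$ is non-decreasing in its first argument for $L\ge 1$, we have $f(\ell_1+\ell_2,L)\le f(k,L)$. Provided $\ell<sk$ (if $\ell=sk$ then $\ell_1=k$ and $\ell_1+\ell_2=2k$, already violating the hypothetical assumption), we get $(sk)^{sk-\ell}\ge 2$, so $N>f(\ell_1+\ell_2,L)$ and $P$ is heavy by Definition \ref{heavy-light-path-definition}. Taking a shortest heavy subpath $P^*$ of $P$ (its length is at least $2$, since a single edge cannot be heavy), every proper subpath of $P^*$ is light by minimality, so $P^*$ is critical. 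Since $P^*\subseteq P\subseteq F$ and $|P^*|\le \ell_1+\ell_2\le k$, this contradicts the assumption that $F$ contains no critical path of length at most $k$. Therefore $\ell_1+\ell_2\ge k+1$.

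For the "moreover" part with $k\in\{3,4\}$: the constraint $\ell_1+\ell_2\ge k+1$ together with the ordering $\ell_1\le\cdots\le\ell_s\le k$ yields a short case split. If $\ell_1=1$, then $\ell_2\ge k$, which forces $\ell_2=k$, and hence $\ell_i=k$ for all $i\ge 2$, giving $(\ell_1,\dots,\ell_s)=(1,k,\dots,k)$. Otherwise $\ell_1\ge 2\ge k/2$ since $k\le 4$. The only real subtlety in the whole argument is confirming that the critical subpath obtained from the heavy path $P$ indeed sits inside $F$ (not merely inside $G$), and this is transparent since $P$ is itself a subgraph of $F$; all other checks amount to routine monotonicity of $f$.
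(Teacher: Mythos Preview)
Your proof is correct and follows essentially the same line as the paper's: both deduce from strong-ness a large family of internally disjoint $v_1,v_2$-paths of length $\ell_1+\ell_2\le k$, conclude that the corresponding path $P\subseteq F$ is heavy, and then obtain a critical subpath of $P$ inside $F$, contradicting the hypothesis; the ``moreover'' case analysis is identical. Your write-up is slightly more explicit than the paper's in justifying $N>f(\ell_1+\ell_2,L)$ (handling the degenerate $\ell=sk$ case and invoking monotonicity of $f(\cdot,L)$) and in spelling out why a shortest heavy subpath is critical, but these are elaborations of the same argument rather than a different approach.
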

\begin{proof} Let $\ell=\ell_1+\dots +\ell_s$. 
By Definition \ref{heavy-light-path-definition}, every $p$-heavy path contains a $q$-critical path for some $q\leq p$. Since $F$ contains no critical path of length at most $k$, it contains no heavy paths of length at most $k$. Suppose to a contrary that $\ell_1+\ell_2\le k$. Let $(v_1,\ldots,v_s)$ be the leaf vector of $F$. Since $F$ is strong, by Definition \ref{strong-definition}, there are at least 
	$(sk)^{sk-\ell}\cdot f(k,L)>f(k,L)$ internally disjoint spiders with leaf vector $(v_1,\dots, v_s)$
	and length vector $(\ell_1,\dots, \ell_s)$. In particular, in their union, there exist
	at least $f(k,L)\ge f(\ell_1+\ell_2,L)$ internally disjoint paths of length $\ell_1+\ell_2$ joining $v_1$ and $v_2$. This means that the path $P$ in $F$ that joins $v_1$ and $v_2$ is $(\ell_1+\ell_2)$-heavy, contradicting our earlier discussion.
	
	Now assume that $k\in\{3,4\}$ and $\ell_1<\frac{k}{2}$. Then we have that $\ell_1=1$. Since $\ell_1+\ell_j\ge\ell_1+\ell_2\ge k+1$ and $\ell_j\le k$ for every $2\le j\le s$, it follows that $\ell_j=k$. Thus $(\ell_1,\ldots,\ell_s)=(1,k,\ldots,k)$. 
\end{proof}

For $k\in\{3,4\}$,  we can now combine Corollary \ref{general-case-cor} and Lemma \ref{1kk-spiders} to obtain
the following.

\begin{corollary}\label{lemma: k=3 or 4, light spiders}
	Let $k\in\{3,4\}$, $K\ge1$ and integers $s,t\ge2$ be fixed. Then provided that $L$ is sufficiently large compared to $s,t,k$ and $K$, for any $\zeta>0$ there exist $C,n_0>0$ such that the following holds. Suppose that $G$ is an $K_{s,t}^k$-free $K$-almost-regular graph $n\ge n_0$ vertices with minimum degree $\delta\ge Cn^{\frac1k-\frac1{sk}}$. 
	Let $\cF$ denote the family of $s$-legged spiders of $k$ that contain a strong sub-spider but contain no critical path of length at most $k$. Then
	$|\cF|\leq \zeta n\delta^{sk}$.
\end{corollary}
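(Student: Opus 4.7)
\medskip
\noindent\textbf{Proof plan.}
The plan is to combine Corollary \ref{general-case-cor} with Lemma \ref{1kk-spiders} by first using the structural dichotomy of Lemma \ref{lemma: strong --> l_i+l_j>k}. Fix any $F\in\cF$: it contains some strong sub-spider, and since $F$ has no critical path of length at most $k$, neither does this sub-spider, so Lemma \ref{lemma: strong --> l_i+l_j>k} applies to it. Writing the length vector of the sub-spider in sorted form $(\ell_1,\dots,\ell_s)$ with $\ell_1\le\cdots\le\ell_s\le k$, we therefore have $\ell_1+\ell_2\ge k+1$ and, crucially because $k\in\{3,4\}$, either $\ell_1\ge k/2$ or $(\ell_1,\dots,\ell_s)=(1,k,\dots,k)$.

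Guided by this dichotomy, I would partition $\cF=\cF_A\cup\cF_B$, where $\cF_A$ consists of those $F$ that contain at least one strong sub-spider whose sorted length vector has $\ell_1\ge k/2$, and $\cF_B:=\cF\setminus\cF_A$ consists of those $F$ whose every strong sub-spider has sorted length vector exactly $(1,k,\dots,k)$. For $\cF_A$: whenever a sorted length vector $(\ell_1,\dots,\ell_s)$ satisfies $\ell_1\ge k/2$, then $\ell_i\ge k/2$ for all $i$ and $\ell_i+\ell_j\ge\ell_1+\ell_2\ge k+1$ for all $i<j$, so the hypotheses of Corollary \ref{general-case-cor} are met. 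I would invoke that corollary with parameter $\beta:=\zeta/(2k^s[f(k,L)]^s)$ once for each of the at most $k^s$ admissible sorted vectors $\vec\ell$, each time bounding the count by $[f(k,L)]^s\cdot\beta n\delta^{\ell}\le[f(k,L)]^s\cdot\beta n\delta^{sk}$ (using $\ell=\sum\ell_i\le sk$ and $\delta\ge 1$); summing gives $|\cF_A|\le(\zeta/2)n\delta^{sk}$.

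For $\cF_B$: by its very definition, every $F\in\cF_B$ contains a $(1,k,\dots,k)$-strong sub-spider but contains no strong sub-spider of any other sorted length vector and no critical path of length at most $k$, which is exactly the family treated by Lemma \ref{1kk-spiders}. I would apply that lemma with $\gamma:=\zeta/2$ to obtain a constant $C>0$ such that $|\cF_B|\le(\zeta/2)n\delta^{sk}$ whenever $\delta\ge Cn^{1/k-1/(sk)}$. Adding the two bounds yields $|\cF|\le\zeta n\delta^{sk}$, as desired.

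The main obstacle I anticipate is not analytic but organizational: one must pick $L$ large enough to satisfy the hypotheses of \emph{both} Corollary \ref{general-case-cor} and Lemma \ref{1kk-spiders} simultaneously, and take $n_0$ large enough that $Cn^{1/k-1/(sk)}$ eventually dominates the minimum-degree threshold $\delta_0$ demanded by Corollary \ref{general-case-cor}. The substantive combinatorial work has already been carried out upstream in those two results, so no essentially new idea is required at this stage.
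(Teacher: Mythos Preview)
Your proposal is correct and matches the paper's intended argument: the paper states this corollary immediately after Lemma \ref{lemma: strong --> l_i+l_j>k} with the remark that one ``combine[s] Corollary \ref{general-case-cor} and Lemma \ref{1kk-spiders}'' and gives no further proof, and your write-up supplies precisely those details---using the dichotomy of Lemma \ref{lemma: strong --> l_i+l_j>k} to split $\cF$ into the $\ell_1\ge k/2$ part (handled by Corollary \ref{general-case-cor} summed over the finitely many admissible length vectors) and the $(1,k,\dots,k)$ part (handled by Lemma \ref{1kk-spiders}). Your bookkeeping on the constants ($\beta$, $\gamma$, $\delta_0$, $C$, $n_0$) is also in order.
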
 
\bigskip

Now, we are finally ready to prove our main theorem.

\medskip

{\bf Proof of Theorem \ref{main}:}
First we set some constants.
Fix integers $s, t\ge2$ and $k\in\{3,4\}$. 
Let $K$ be obtained by Lemma \ref{almost-regular} with $\epsilon=\frac1k-\frac1{sk}$. 
Let $c=c(k,s)$ such that $\frac1c$ equals the cardinality of the automorphism group of the $s$-legged spider of height $k$. 
Choose $L$ to be a large constant such that Lemma \ref{short-paths} and Corollary \ref{lemma: k=3 or 4, light spiders} are valid. 
We further require that $L$ is large enough such that $\frac{2K^{sk}}{f(1,L)}\le\frac{c}{4k}$. 
Let $C_1=[(sk)^{sk}\cdot f(k,L)^2]^{sk}$, that is, be the constant in Lemma \ref{lemma: find t internally disjoint spiders} with $\ell=sk$. 
Let $C$ be a large constant such that Corollary \ref{lemma: k=3 or 4, light spiders} holds with $\zeta:=\frac{c}8$. We further require that $C$ is large enough such that $\frac{cC^{sk}}{8}\ge C_1$.

By Lemma \ref{almost-regular}, it suffices to show the following statement. 
For sufficiently large $n$, if $G$ is an $n$-vertex $K$-almost-regular graph with minimum degree $\delta\geq C n^{\frac{1}{k}-\frac{1}{sk}}$, then $G$ contains a copy of $K_{s,t}^k$.

We will prove this by contradiction. Suppose to the contrary that $G$ is $K_{s,t}^k$-free. Let $\cF$ be the family of all the $s$-legged spiders of height $k$ in $G$. Then by a greedy process, it is easy to see that 
\begin{equation}\label{equ: size of cF}
|\cF|\geq cn\prod_{i=0}^{sk-1}(\delta-i) \ge \frac{c} 2n \delta^{sk},
\end{equation}
where the last inequality holds because $\delta\geq C n^{\frac{1}{k}-\frac{1}{sk}}$ and $n$ is sufficiently large.
By Lemma \ref{short-paths}, for every $2\le\ell\le k$, the number of critical paths of length $\ell$ is at most $\frac{2}{f(\ell-1,L)}n(K\delta)^{\ell}$. Since the maximum degree of $G$ is at most $K\delta$, the number of members of $\cF$ that contain a critical path of length $\ell$ is at most $\frac{2}{f(\ell-1,L)}n(K\delta)^{\ell}\cdot(K\delta)^{sk-\ell}=\frac{2K^{sk}}{f(\ell-1,L)}n \delta^{sk}\le \frac{2K^{sk}}{f(1,L)}n\delta^{sk}\le \frac{c}{4k}n\delta^{sk}$, where the inequality holds by the choice of $L$. So the number of members of $\cF$ that contain a critical path of length at most $k$ is no more than $(k-1)\cdot\frac{c}{4k}n\delta^{sk}<\frac{c}{4}n\delta^{sk}$. Let $\cF'$ denote the family of members of $\cF$ that contain no critical path of length at most $k$. It follows that 
\begin{equation}\label{equ: size of cF'}
|\cF'|\ge|\cF|-\frac{c}{4}n\delta^{sk}\ge\left(\frac{c}2-\frac{c}{4}\right)n\delta^{sk}\ge\frac{c}{4}n\delta^{sk},
\end{equation}
where in the second inequality we used \eqref{equ: size of cF}.

Let $\cF''$ denote the family of spiders in $\cF'$ that contain no strong sub-spider. By Corollary \ref{lemma: k=3 or 4, light spiders} we have that
$$|\cF'\setminus\cF''|\le\zeta n\delta^{sk}=\frac{c}{8}n\delta^{sk},$$
where the last equality holds by the choice of $\zeta$.
This, together with \eqref{equ: size of cF'}, gives us that
$$|\cF''|=|\cF'|-|\cF'\setminus\cF''|\ge\frac{c}4n\delta^{sk}-\frac{c}{8}n\delta^{sk}=\frac{c}{8}n\delta^{sk}.$$
Since $\delta\ge Cn^{\frac1k-\frac1{sk}}$, it follows that
$$|\cF''|\ge\frac{c}8n\delta^{ks}\ge\frac{c C^{ks}}{8}n^s\ge C_1n^s,$$
where the last inequality holds because of the choice of $C$.
By averaging, there exists a tuple
$(v_1,\dots, v_s)$ of distinct vertices such that the subfamily $\cF_1$ of $\cF''$ that consist of all the members of  $\cF''$ that have leaf vector $(v_1,\ldots,v_s)$ has size at least $$|\cF_1|\ge\frac{|\cF''|}{n^s}\ge \frac{C_1n^s}{n^s}=C_1.$$
Now $\cF_1$ is a family of spiders that  have the same leaf vector and contain no critical path of length at most $k$. Since  $|\cF_1|\ge C_1$, by the definition of $C_1$ given at the beginning of the proof and Lemma \ref{lemma: find t internally disjoint spiders}, there exists a member of $\cF_1\subseteq \cF''$ that contains a strong sub-spider. This contradicts our definition of $\cF''$
and completes the proof.
\qed


\section{Concluding remarks}

\medskip

It is easy to derive from the discussions in Sections \ref{critical-paths}, \ref{general-case}, and
\ref{main-proof} the following weakening of Conjecture \ref{CJL-conjecture}.

\begin{proposition} \label{at-most-k}
Let $s,t,k\geq 2$ be integers. Let $\cK^{\leq k}_{s,t}$ denote the family of graphs that
can be be obtained from $K_{s,t}$ by replacing each edge $uv$ with a path of length at
most $k$ between $u$ and $v$ so that the $st$ replacing paths are internally disjoint.
Then $ex(n,\cK^{\leq k}_{s,t})=O(n^{1+\frac{1}{k}-\frac{1}{sk}})$.
\end{proposition}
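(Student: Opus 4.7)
The plan is to closely follow the proof of Theorem \ref{main}, with one substantial simplification: since any $(\ell_1,\dots,\ell_s)$-strong spider whose legs all have length at most $k$ already yields a member of $\cK^{\leq k}_{s,t}$ once we take $t$ internally disjoint copies, we never need the length-vector case analysis driven by Lemmas \ref{balanced-spiders} and \ref{1kk-spiders} that forced the hypothesis $k\in\{3,4\}$. Consequently the argument works uniformly for all $k\geq 2$.

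First I would apply Lemma \ref{almost-regular} to reduce to the regime where $G$ is a $K$-almost-regular $n$-vertex graph with $\delta(G)\geq Cn^{\frac{1}{k}-\frac{1}{sk}}$ for a sufficiently large constant $C$, and choose $L$ large enough that Lemma \ref{short-paths} and Lemma \ref{lemma: find t internally disjoint spiders} both apply. Set $C_1=[(sk)^{sk}\cdot f(k,L)^2]^{sk}$, and take $C$ large enough that $\tfrac{c}{4}C^{sk}\geq C_1$, where $c=c(k,s)$ is the same automorphism-group constant used in the proof of Theorem \ref{main}. Suppose for contradiction that $G$ contains no member of $\cK^{\leq k}_{s,t}$.

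Next, let $\cF$ denote the family of balanced $s$-legged spiders of height $k$ in $G$. A greedy count, identical to the one in the proof of Theorem \ref{main}, gives $|\cF|\geq \tfrac{c}{2}n\delta^{sk}$. Using Lemma \ref{short-paths} together with the same $(K\delta)^{sk-\ell}$ extension bound used in the proof of Theorem \ref{main}, the number of members of $\cF$ containing a critical path of length at most $k$ is at most $\tfrac{c}{4}n\delta^{sk}$. Letting $\cF'$ be the remaining subfamily, $|\cF'|\geq \tfrac{c}{4}n\delta^{sk}$. Averaging over the at most $n^s$ possible leaf vectors produces a vector $(v_1,\dots,v_s)$ whose fiber $\cF_1\subseteq\cF'$ satisfies $|\cF_1|\geq \tfrac{c}{4}\delta^{sk}/n^{s-1}\geq \tfrac{c}{4}C^{sk}\geq C_1$. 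All members of $\cF_1$ share the same leaf vector and the length vector $(k,\dots,k)$, and none contains a critical path of length at most $k$, so Lemma \ref{lemma: find t internally disjoint spiders} produces a member of $\cF_1$ that contains a strong sub-spider; call its leaf vector $(v_1',\dots,v_s')$ and its length vector $(\ell_1,\dots,\ell_s)$, with each $\ell_i\leq k$.

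Finally, by the definition of $(\ell_1,\dots,\ell_s)$-strong, $G$ contains at least $(sk)^{sk-\ell}\cdot f(k,L)\geq f(k,L)\gg t$ internally vertex-disjoint spiders with this leaf vector and length vector. Choosing any $t$ of them yields $t$ distinct centers $w_1,\dots,w_t$ joined to the common leaves $v_1',\dots,v_s'$ by pairwise internally disjoint paths, where the $w_iv_j'$-path has length $\ell_j\leq k$. This is exactly a member of $\cK^{\leq k}_{s,t}$, contradicting the assumption and proving the proposition. The main obstacle in the proof of Theorem \ref{main} was the final $K_{s,t}^k$-construction step that required Lemmas \ref{balanced-spiders} and \ref{1kk-spiders} (and hence forced the restriction $k\in\{3,4\}$ via Lemma \ref{lemma: strong --> l_i+l_j>k}); here that step becomes trivial because allowing legs of any length at most $k$ means we do not have to further extend legs to reach length exactly $k$. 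There is therefore essentially no new technical obstacle — the heavy lifting is already done by Lemmas \ref{short-paths} and \ref{lemma: find t internally disjoint spiders}, both of which hold for all $k\geq 2$.
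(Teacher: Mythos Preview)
Your proposal is correct and matches the derivation the paper has in mind: since the paper only asserts that the proposition ``is easy to derive from the discussions in Sections~\ref{critical-paths}, \ref{general-case}, and~\ref{main-proof}'' without giving an explicit proof, your reconstruction---reduce to almost-regular, count balanced height-$k$ spiders, strip those with critical subpaths via Lemma~\ref{short-paths}, average to a common leaf vector, apply Lemma~\ref{lemma: find t internally disjoint spiders} to obtain a strong sub-spider, and take $t$ internally disjoint copies---is precisely the intended argument, with the key observation (which you state clearly) that any strong sub-spider already yields a member of $\cK^{\leq k}_{s,t}$, so the machinery of Lemmas~\ref{balanced-spiders} and~\ref{1kk-spiders} is unnecessary and the restriction $k\in\{3,4\}$ disappears. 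One cosmetic remark: Lemma~\ref{lemma: find t internally disjoint spiders} has no largeness requirement on $L$; what you actually need from $L$ is that Lemma~\ref{short-paths} applies, that $\tfrac{2K^{sk}}{f(1,L)}\leq \tfrac{c}{4k}$ (so the critical-path deletion costs at most $\tfrac{c}{4}n\delta^{sk}$), and that $f(k,L)\geq t$ for the final step.
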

This together with the general theorem of Bukh and Conlon \cite{BC} implies the following.
\begin{corollary}
Let $s,t,k\geq 2$ be integers. Then $ex(n,\cK^{\leq k}_{s,t})=\Theta(n^{1+\frac{1}{k}-\frac{1}{sk}})$.
\end{corollary}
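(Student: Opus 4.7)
The plan is to essentially recycle the proof of Theorem \ref{main}, observing that under the stronger hypothesis of being $\cK^{\leq k}_{s,t}$-free the case analysis that restricts Theorem \ref{main} to $k\in\{3,4\}$ becomes unnecessary. The key point is that in the original proof, a $(\ell_1,\dots,\ell_s)$-strong sub-spider with some $\ell_i<k$ did not obviously embed into a copy of $K_{s,t}^k$, which is what forced the restrictions on the length vector in Lemma \ref{balanced-spiders} and the separate treatment of the $(1,k,\dots,k)$-case via Lemma \ref{1kk-spiders}. Here, any strong sub-spider whose length vector has entries in $[1,k]$ directly produces a member of $\cK^{\leq k}_{s,t}$, so no such case analysis is needed and the argument works uniformly for all $k\geq 2$.

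First, I would apply Lemma \ref{almost-regular} to reduce to the case where $G$ is $K$-almost-regular on $n$ vertices with $\delta(G)\geq C n^{\frac{1}{k}-\frac{1}{sk}}$ for an appropriate constant $C$, and suppose for contradiction that $G$ is $\cK^{\leq k}_{s,t}$-free. A greedy count produces at least $\Omega(n\delta^{sk})$ $s$-legged spiders of height exactly $k$ in $G$. Exactly as in Section \ref{main-proof}, Lemma \ref{short-paths} combined with the maximum-degree bound $\Delta(G)\leq K\delta$ shows that, after choosing $L$ sufficiently large, only a small fraction of these spiders contain a critical path of length at most $k$; discarding them leaves a family $\cF'$ of at least $\frac{c}{4}\,n\delta^{sk}$ clean height-$k$ spiders, where $c=c(k,s)$ is the same constant used in the proof of Theorem \ref{main}.

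Averaging over the at most $n^s$ ordered leaf $s$-tuples, some tuple $(v_1,\dots,v_s)$ serves as the leaf vector of at least $\tfrac{c}{4}\delta^{sk}/n^{s-1}\geq \tfrac{c}{4}C^{sk}$ members of $\cF'$, using $\delta^{sk}\geq C^{sk} n^{s-1}$. Choosing $C$ large enough so that this exceeds the threshold $C_1=[(sk)^{sk}\cdot f(k,L)^2]^{sk}$ of Lemma \ref{lemma: find t internally disjoint spiders}, the lemma (applied with $\ell=sk$ and common length vector $(k,\dots,k)$) supplies a member of this subfamily containing a $(\ell_1,\dots,\ell_s)$-strong sub-spider for some length vector with $\ell_i\leq k$ for every $i$. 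Writing $\ell=\ell_1+\dots+\ell_s$, Definition \ref{strong-definition} then yields at least $(sk)^{sk-\ell}\cdot f(k,L)\geq f(k,L)\geq t$ internally vertex-disjoint spiders of $G$ sharing a common leaf vector and length vector $(\ell_1,\dots,\ell_s)$, provided $L$ is additionally chosen so that $f(k,L)\geq t$. The union of any $t$ of these spiders is a $K_{s,t}$-subdivision in which the $st$ internally disjoint replacing paths all have length at most $k$, that is, a copy of a member of $\cK^{\leq k}_{s,t}$, contradicting our assumption.

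The only real ``obstacle'' is bookkeeping: the constants must be fixed in the correct order, with $L$ chosen first to simultaneously validate Lemma \ref{short-paths}, Lemma \ref{lemma: find t internally disjoint spiders}, and $f(k,L)\geq t$, then $C$ chosen large enough in terms of $L$ to force $\tfrac{c}{4}C^{sk}\geq C_1$, and finally $n_0$ chosen so that the regularization and the greedy lower bound both apply. No new conceptual ingredient beyond the lemmas already developed in Sections \ref{critical-paths}, \ref{general-case}, and \ref{main-proof} is needed; in particular, Corollary \ref{lemma: k=3 or 4, light spiders} and Lemma \ref{lemma: strong --> l_i+l_j>k}, which are precisely what restricts Theorem \ref{main} to $k\in\{3,4\}$, are simply bypassed here because any strong sub-spider already suffices.
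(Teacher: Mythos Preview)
Your argument for the upper bound $\ex(n,\cK^{\leq k}_{s,t})=O(n^{1+\frac{1}{k}-\frac{1}{sk}})$ is correct and is exactly what the paper has in mind: this is the content of Proposition \ref{at-most-k}, for which the paper only says ``it is easy to derive from the discussions in Sections \ref{critical-paths}, \ref{general-case}, and \ref{main-proof}.'' Your observation that a strong sub-spider with any length vector $(\ell_1,\dots,\ell_s)$, $\ell_i\le k$, immediately yields $t$ internally disjoint spiders whose union lies in $\cK^{\leq k}_{s,t}$ is precisely why Lemma \ref{balanced-spiders}, Lemma \ref{1kk-spiders}, and Lemma \ref{lemma: strong --> l_i+l_j>k} can be bypassed, and the argument then works for all $k\ge 2$.

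However, the corollary asserts a $\Theta$ bound, and your proposal addresses only the $O$ direction. You have not said anything about the lower bound $\ex(n,\cK^{\leq k}_{s,t})=\Omega(n^{1+\frac{1}{k}-\frac{1}{sk}})$. The paper obtains this from the general theorem of Bukh and Conlon \cite{BC}: their random algebraic construction for the balanced rooted spider with $s$ legs of length $k$ (rooted at the leaves) produces graphs with $\Omega(n^{1+\frac{1}{k}-\frac{1}{sk}})$ edges that avoid every member of $\cK^{\leq k}_{s,t}$ once $t$ is large enough. Note that this does not follow formally from the single-graph bound $\ex(n,K_{s,t}^k)=\Omega(n^{1+\frac{1}{k}-\frac{1}{sk}})$, since forbidding a family is more restrictive; one really needs that the Bukh--Conlon construction simultaneously avoids all members of the family. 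You should add a sentence invoking \cite{BC} for the lower bound to complete the proof.
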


\section{Acknowledgements}
The authors thank Jozsef Balogh for his valuable comments on an earlier version of the paper,
in particular, for suggesting the question leading to Proposition \ref{at-most-k}.

\end{document}